\def\RR{{\bf R}}
\def\ZZ{{\bf Z}}
\def\QQ{{\bf Q}}
\def\Rt{{\bf R}\{t\}}
\def\Ct{{\bf C}\{t\}}
\def\mDelta{{\mit\Delta}}
\numberwithin{equation}{section}
\newcommand{\val}{\mathop{\rm val} }
\newcommand{\diag}{\mathop{\rm diag} }
\newcommand{\supp}{\mathop{\rm supp} }
\newcommand{\Real}{\mathop{\rm Re} }
\newcommand{\sign}{\mathop{\rm sign}}
\newcommand{\Pf}{\mathop{\rm Pf}}
\theoremstyle{plain}
\newtheorem{thm}{Theorem}[section]
\newtheorem{lem}[thm]{Lemma}
\newtheorem{cor}[thm]{Corollary}
\theoremstyle{definition}
\newtheorem{rmk}[thm]{Remark}
\title{A Combinatorial Formula for Principal Minors of a Matrix with Tree-metric Exponents and Its Applications}
\author{Hiroshi Hirai \thanks{
Graduate School of Information Science and Technology,
The University of Tokyo, Tokyo 113-8656, Japan. 
Email:\{hirai, akihiro\_yabe\}@mist.i.u-tokyo.ac.jp
} \and Akihiro Yabe ${}^*$
}
\begin{document}
\maketitle
\begin{abstract}
Let $T$ be a tree with a vertex set $\{ 1,2,\dots, N \}$. Denote by $d_{ij}$ 
the distance between vertices $i$ and $j$.
In this paper, we present an explicit combinatorial formula of principal minors of the matrix $(t^{d_{ij}})$, 
and its applications to tropical geometry, study of multivariate stable polynomials, and representation of valuated matroids. 
We also give an analogous formula for 
a skew-symmetric matrix associated with $T$.
\end{abstract}
\section{Introduction}\label{sec_intro}
Let $T = (V,E)$ be a tree, where $V=\{1,2,\dots, N\}$. 
For $i,j \in V$, denote by $d_{ij}$ 
the number of edges of the unique path between $i$ and $j$ in $T$. 
With an indeterminate $t$, define the matrix $A = (a_{ij})$ by
\begin{align*}
a_{ij} := t^{d_{ij}} \quad (i,j \in V).
\end{align*}
This matrix appeared in the study of the $q$-distance matrix of a tree~\cite{Bapat2006Aqo}. Yan and Yeh \cite{Yan2007Tdo} showed that $\det A$ is given by the following simple formula:
\begin{thm}[Yan--Yeh \cite{Yan2007Tdo}]
$\det A = (1 - t^2)^{N-1}$.
\end{thm}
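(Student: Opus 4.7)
My plan is to prove the identity by induction on $N$, using leaf removal to reduce to a smaller tree.

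The base case $N=1$ is trivial since $A=(1)$ and $(1-t^2)^0 = 1$. For the inductive step, pick a leaf, which we may assume is vertex $N$, and let $p$ be its unique neighbor. The key observation is that for every $i \neq N$ the unique path from $i$ to $N$ passes through $p$, so
\begin{equation*}
d_{iN} = d_{ip} + 1, \qquad i \neq N,
\end{equation*}
and consequently $a_{iN} = t \cdot a_{ip}$ and $a_{Ni} = t \cdot a_{pi}$, while $a_{NN} = 1$ and $a_{pp} = 1$.

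The plan is to exploit this proportionality by two elementary operations that do not change the determinant. First, subtract $t$ times row $p$ from row $N$. Since $a_{Ni} - t a_{pi} = 0$ for $i \neq N, p$, and $a_{NN} - t a_{pN} = 1 - t^2$, and $a_{Np} - t a_{pp} = t - t = 0$, the new row $N$ becomes zero except for a single entry $1-t^2$ in column $N$. Next, subtract $t$ times column $p$ from column $N$; the same computation (using symmetry of $A$, and noting that the new row $N$ now has a $0$ in column $p$) zeros out the rest of column $N$ while leaving the $(N,N)$ entry equal to $1-t^2$. The resulting matrix is block diagonal: the top-left $(N-1)\times(N-1)$ block is exactly the matrix $(t^{d_{ij}})_{i,j \in V\setminus\{N\}}$ associated with the subtree $T \setminus \{N\}$, and the bottom-right $1\times 1$ block is $(1-t^2)$.

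Applying the inductive hypothesis to this subtree yields $\det A = (1-t^2)(1-t^2)^{N-2} = (1-t^2)^{N-1}$, as required. There is essentially no obstacle here; the only point one must verify carefully is that the column operation does not disturb the zeros produced by the row operation, which is guaranteed because row $N$ has a zero in column $p$ after the first step.
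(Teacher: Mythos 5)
Your induction is correct: because $N$ is a leaf with neighbor $p$, every path from $i\neq N$ to $N$ passes through $p$, so $a_{Ni}=t\,a_{pi}$ for all $i\neq N$; the row operation therefore leaves row $N$ zero except for the entry $1-t^2$ in column $N$, and since that row now has a $0$ in column $p$, the column operation clears column $N$ without disturbing anything, producing a block-diagonal matrix whose upper block is the matrix of the subtree $T\setminus\{N\}$ (leaf deletion does not alter distances among the remaining vertices), whence $\det A=(1-t^2)\cdot(1-t^2)^{N-2}$. This is a genuinely different route from the paper's: the paper does not reprove Yan--Yeh directly, but recovers it as the special case $X=V$ of Theorem~\ref{thm_main}, whose proof expands $\det A[X]$ over permutations, re-encodes permutations as cycle partitions traced on the tree, cancels all non-tight partitions by a sign-reversing flip argument on edges traversed four or more times, and then evaluates the surviving contributions forest by forest; for $X=V$ the resulting sum $\sum_{E_F\subseteq E}(-1)^{|E_F|}t^{2|E_F|}=(1-t^2)^{N-1}$ is just the binomial expansion. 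Your leaf-deletion argument is considerably more elementary and extends verbatim to an edge-weighted tree, giving $\det(t^{D_{ij}})=\prod_{e\in E}(1-t^{2l(e)})$; what it does not yield is the paper's actual goal, the formula for an arbitrary principal minor $\det A[X]$: deleting a leaf of $T$ that lies outside $X$ leaves $A[X]$ unchanged, so the proportionality trick has no analogue for Steiner vertices, and it is precisely those vertices (through the degree factors $\deg_F(v)-1$) that the cycle-partition machinery is built to handle.
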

Our main result can be understood as an extension of Yan--Yeh's formula to principal minors of $A$. 
The motivation of our investigation, however, comes from study of multivariate stable polynomials~\cite{Branden2007Pwt, Branden2010DCa, Choe2004Hmp}, tropical geometry~\cite{Develin2004Tc, Speyer2004TtG}, and representation of valuated matroids~\cite{DressPro1998, Dress1992Vm}. To state our result, let us introduce some notions. 
For $X \subseteq V$, denote by $A[X]$ 
the principal submatrix of $A$ consisting of $a_{ij}$ for $i,j \in X$. 
We say that a forest $F=(V_F, E_F)$ {\it is spanned by} $X$ if $X \subseteq V_F$ and all leaves of $F$ are contained in $X$. Note that the subtree of $T$ spanned by $X$ is the unique minimal subtree including $X$, which is denoted by $T_X = (V_X,E_X)$. 
Define $c(F)$ as the number of connected components of $F$. Denote by ${\rm deg}_{F}(v)$ the degree of a vertex $v$ in $F$. 
Then our main result is the following:
%
\begin{thm}\label{thm_main}
\begin{align}
\det A[X] = \sum_{F}  (-1)^{|X|+ c(F)} t^{2|E_F|}  \prod_{v \in V_{F} \setminus X} ({\rm deg}_F (v) - 1) , \label{eq_main_det_f}
\end{align}
where the sum is taken over all subgraphs $F$ of $T$ spanned by $X$. 
In particular, the leading term is given by
\begin{align}
(-1)^{|X| + 1}  t^{2|E_X|}  \prod_{v \in V_X \setminus X} ({\rm deg}_{T_X }(v) - 1) . \label{eq_main_det_lead}
\end{align}
\end{thm}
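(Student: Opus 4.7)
The plan is to induct on $|V_X|$, after noting that both sides of~(\ref{eq_main_det_f}) depend only on $T_X$: every forest $F$ spanned by $X$ satisfies $V_F \subseteq V_X$, and $A[X]$ is determined by the distances realized inside $T_X$. We may therefore assume $T = T_X$, so that leaves of $T$ lie in $X$ and every $v \in V \setminus X$ has degree $\geq 2$. The base case $|V|=1$ is trivial.

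For the inductive step, pick a leaf $\ell \in X$ with unique neighbor $\ell'$. Since $d_{\ell,j} = 1 + d_{\ell',j}$ for $j \neq \ell$, the $\ell$-th row of $A[X]$ decomposes as $(1-t^2)e_\ell + t\,r_{\ell'}$, where $r_{\ell'} := (t^{d_{\ell',j}})_{j \in X}$. Multilinearity gives
\[
\det A[X] \;=\; (1-t^2)\det A[X \setminus \ell] \;+\; t\det M,
\]
with $M$ the matrix obtained from $A[X]$ by replacing row $\ell$ with $r_{\ell'}$. If $\ell' \in X$, then $M$ has two identical rows, so $\det M = 0$. If $\ell' \notin X$, set $X' := (X \setminus \ell) \cup \{\ell'\}$; a column-by-column comparison shows that $M$ and $A[X']$ agree except that the column of $M$ indexed by $\ell$ equals $t$ times the column of $A[X']$ indexed by $\ell'$, so $\det M = t\det A[X']$. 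Since $\ell \notin V_{X \setminus \ell}$ and $\ell \notin V_{X'}$, the inductive hypothesis applies to both smaller cases.

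To close the induction, denote the right-hand side of~(\ref{eq_main_det_f}) by $L_X$ and show $L_X$ obeys the same recursion. Partition the forests $F$ spanned by $X$ by whether $\{\ell,\ell'\} \in F$. The forests with $\ell$ isolated contribute exactly $L_{X\setminus\ell}$. The forests containing this edge restrict, upon deleting $\ell$ and the edge, to a forest $F''$ spanned by $X\setminus\ell$ (when $\ell' \in X$) or by $X'$ (when $\ell' \notin X$). The case $\ell' \in X$ gives $-t^2 L_{X\setminus\ell}$ after a direct sign computation, so $L_X = (1-t^2) L_{X \setminus \ell}$. In the case $\ell' \notin X$, the Steiner vertex $\ell'$ contributes the extra factor $\deg_F(\ell') - 1 = \deg_{F''}(\ell')$ in the product, giving the sub-sum $t^2 \sum_{F''} \deg_{F''}(\ell')\,[L_{X'}\text{-term}]$. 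Matching against $(1-t^2) L_{X \setminus \ell} + t^2 L_{X'}$ is then equivalent to the combinatorial identity
\[
\sum_{F'' \text{ spanned by } X'} \bigl(\deg_{F''}(\ell') - 1\bigr)\,[L_{X'}\text{-term for } F''] \;=\; -L_{X \setminus \ell},
\]
which I would prove by sorting $F''$ by $\deg_{F''}(\ell')$: terms with $\deg = 1$ vanish; terms with $\deg \geq 2$ correspond to forests spanned by $X \setminus \ell$ containing $\ell'$, with $(\deg - 1) \geq 1$ matching the extra factor in the product over $V_{F''} \setminus (X \setminus \ell) = (V_{F''} \setminus X') \cup \{\ell'\}$; isolated-$\ell'$ terms correspond via $F'' \mapsto F'' \setminus \{\ell'\}$ to forests spanned by $X \setminus \ell$ that omit $\ell'$, with $(\deg - 1) = -1$ absorbing the extra component. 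The overall sign $(-1)^{|X'|} = -(-1)^{|X \setminus \ell|}$ provides the required minus sign.

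Formula~(\ref{eq_main_det_lead}) is then immediate: $F = T_X$ is the unique forest of maximum $|E_F|$ in the sum~(\ref{eq_main_det_f}), contributing the stated leading term, and every other forest has strictly lower $t$-degree. The main obstacle is the isolated-$\ell'$ case of the combinatorial identity above, since those forests contribute nontrivially through $(\deg_{F''}(\ell') - 1) = -1$ rather than vanishing; they must be paired carefully with forests spanned by $X \setminus \ell$ that avoid $\ell'$, and the sign bookkeeping reconciled with the ``$\deg \geq 2$'' bijection.
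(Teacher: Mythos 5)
Your proof is correct, but it follows a genuinely different route from the paper's. The paper expands $\det A[X]$ over permutations, reinterprets permutations as cycle partitions traced on the tree, kills all non-tight partitions by a sign-reversing flip involution (a regular-bipartite-graph argument), and then evaluates the signed count $\langle {\cal W}_{X,F}\rangle$ attached to each forest $F$ through decomposition lemmas and a star-case recursion. You instead peel off a leaf $\ell \in X$ of $T_X$: writing row $\ell$ as $(1-t^2)e_\ell + t\,r_{\ell'}$ and using multilinearity gives the clean recursion $\det A[X] = (1-t^2)\det A[X\setminus \ell]$ when $\ell' \in X$, and $\det A[X] = (1-t^2)\det A[X\setminus \ell] + t^2 \det A[X']$ with $X' = (X\setminus\ell)\cup\{\ell'\}$ when $\ell' \notin X$, and then you verify that the forest sum obeys the same recursion; I checked the bookkeeping, including the key identity $\sum_{F''}(\deg_{F''}(\ell')-1)[\,L_{X'}\hbox{-term}\,] = -L_{X\setminus\ell}$, where the $\deg=1$ terms vanish, the $\deg\geq 2$ terms match forests spanned by $X\setminus\ell$ through $\ell'$, and the isolated-$\ell'$ terms match forests avoiding $\ell'$ with the factor $-1$ cancelling the sign of the extra component, the global sign coming from $|X'|=|X\setminus\ell|+1$; note also that in the case $\ell'\in X$ every forest spanned by $X\setminus\ell$ automatically contains $\ell'$, which is what makes that case give exactly $-t^2L_{X\setminus\ell}$. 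Your argument is more elementary (no cancellation over cycle partitions) and the three-term determinant recursion is of independent interest; the paper's approach buys an explicit combinatorial meaning for each coefficient (the signed count of tight cycle partitions supported on $F$), which is closer in spirit to the Pfaffian analogue treated later. Two small points you leave implicit and should state: the reduction to $T=T_X$ uses the convention that isolated vertices of $F$ count as leaves (so they must lie in $X$), and the leading-term claim needs the remark that every Steiner vertex of $T_X$ has degree at least $2$, so the product in (\ref{eq_main_det_lead}) is a positive integer and the coefficient does not vanish.
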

In the case $X = V$, the formula (\ref{eq_main_det_f}) coincides with the binomial expansion of Yan-Yah's formula.

Our formula brings a strong consequence on the signature of $A[X]$. 
Recall that the signature of a symmetric matrix is a pair $(p,q)$ of the number $p$ of positive eigenvalues and the number $q$ of negative eigenvalues. 
When we substitute a sufficiently large value for $t$, the sign of $\det A[X]$ is determined by the leading term. 
By (\ref{eq_main_det_lead}), $\det A[X] > 0$ if $|X|$ is odd, and $\det A[X] < 0$ if $|X|$ is even. 
From Sylvester's law of inertia, the number of sign changes of leading principal minors is equal to the number of negative eigenvalues (see \cite[Theorem 2 in Chapter X]{Gantmacher1959Tto}). 
Therefore the signature of $A[X]$ is $(1,|X| -1)$. 
This argument works on the field $\Rt$ of Puiseux series (defined in Section~\ref{sec_app}).
Thus we have the following.
\begin{cor}\label{cor_signature1}
The signature of $A[X]$ is $(1,|X|-1)$. 
\end{cor}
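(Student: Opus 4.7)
The plan is to apply the Jacobi--Sylvester criterion, which relates the signature of a symmetric matrix to the number of sign changes in the sequence of its leading principal minors, and to read the requisite signs directly off the leading-term formula (\ref{eq_main_det_lead}).

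The first step is to work in the ordered field $\Rt$ of Puiseux series, with $t$ regarded as positive and infinitely large, so that the sign of a nonzero element is determined by its leading coefficient. For any $Y \subseteq X$, applying Theorem~\ref{thm_main} to the tree spanned by $Y$ yields
\[
\det A[Y] = (-1)^{|Y|+1} t^{2|E_Y|} \prod_{v \in V_Y \setminus Y}(\deg_{T_Y}(v) - 1) + (\text{lower order in } t).
\]
Every vertex of $V_Y \setminus Y$ is an internal (non-leaf) vertex of $T_Y$ and therefore has degree at least $2$, so the product above is a positive integer. Hence $\sign \det A[Y] = (-1)^{|Y|+1}$ in $\Rt$; in particular $A[Y]$ is nonsingular for every $Y \subseteq X$.

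The second step is to fix an arbitrary ordering $X = \{x_1, x_2, \dots, x_n\}$ with $n = |X|$ and set $X_k := \{x_1, \dots, x_k\}$. The leading principal minors $D_k := \det A[X_k]$ of $A[X]$ with respect to this ordering have signs $+, -, +, -, \dots$ by the previous step, so the augmented sequence $1, D_1, D_2, \dots, D_n$ alternates after the first term and contains exactly $n - 1 = |X| - 1$ sign changes. Invoking \cite[Theorem~2 in Chapter~X]{Gantmacher1959Tto} then identifies this count with the number of negative eigenvalues of $A[X]$; since $A[X]$ is nonsingular the remaining eigenvalue is positive, giving signature $(1, |X| - 1)$.

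The only nontrivial point to check is that the Jacobi--Sylvester criterion applies in the Puiseux-series setting rather than over $\RR$. This is purely formal: either one observes that its classical proof uses only the ordered-field axioms (together with a continuity argument valid in any real-closed field, of which $\Rt$ is a subfield), or, more concretely, one specializes $t$ to a real number large enough that every $D_k$ has the sign of its leading term, reducing the problem to the classical theorem over $\RR$. I do not expect any other substantive obstacle.
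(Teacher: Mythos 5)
Your proposal is correct and follows essentially the same route as the paper: read the alternating signs $(-1)^{|Y|+1}$ of the leading principal minors off the leading-term formula (\ref{eq_main_det_lead}), invoke Gantmacher's sign-change criterion to count $|X|-1$ negative eigenvalues, and transfer the classical statement to $\Rt$ (the paper does this via Tarski's principle, which amounts to your real-closed-field/large-$t$ remark). No gaps.
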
 
In particular, $A[X]$ is nonsingular and defines the Minkowski inner product, i.e., a nondegenerate bilinear form with exactly one positive eigenvalue.

We also consider a skew-symmetric matrix $B = (b_{ij})$ defined by
\[
b_{ij} = -b_{ji} = 
t^{d_{ij}} \quad (i<j).
\]
Denote by $B[X]$ the principal submatrix of $B$ as above. 
In contrast with the symmetric case, the Phaffian 
$\Pf B [X]$ depends on the ordering of $X$. 
We give a simple formula for the case where $X$ has a special ordering, 
though we do not know such a formula for general case. 
A vertex subset $X = \{i_1,i_2,\ldots, i_k\}$ with
$i_1 < i_2 < \cdots < i_k$ 
is said to be {\em nicely-ordered} (with respect to a given tree $T$)
if the tour $i_1 \rightarrow i_2 \rightarrow \cdots \rightarrow i_k \rightarrow i_1$ in $T$
passes through each edge in $T$ at most twice. 
An edge $e$ of $T$ is said to be {\em odd} (with respect to $X$) 
if both two components obtained by
the removal of $e$ from $T$ include 
an odd number of vertices in $X$.
Let $O_{X} \subseteq E$ be the set of odd edges. 
\begin{thm}\label{thm_main_pf}
If $X$ is nicely-ordered and $|X|$ is even, then 
\begin{align}
\Pf B [X]  = t^{|O_{X}|}. \label{eq_thm_main_pf}
\end{align}
\end{thm}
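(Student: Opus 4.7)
The plan is to proceed by induction on $|X|$ via the Pfaffian row expansion
\[
\Pf B[X] = \sum_{j=2}^{|X|} (-1)^j\, t^{d(i_1, i_j)}\, \Pf B[X \setminus \{i_1, i_j\}].
\]
The base case $|X|=2$ is immediate, since every edge on the unique $i_1$--$i_2$ path in $T$ is odd, giving $|O_X| = d(i_1, i_2)$. In the inductive step I first check that $X \setminus \{i_1, i_j\}$ is again nicely-ordered for every $j$: its tour is obtained from the old one by replacing two consecutive segments with tree geodesics, and on a tree a geodesic uses each edge at most as often as any walk joining the same endpoints, so no edge usage increases beyond $2$. A per-edge parity analysis (the oddness of an edge $e$ with respect to $X$ versus $X \setminus \{i_1, i_j\}$ flips exactly when $e$ lies on the $i_1$--$i_j$ path) yields
\[
d(i_1, i_j) + |O_{X \setminus \{i_1, i_j\}}| = |O_X| + 2 m_j,
\]
where $m_j$ counts the non-odd edges on that path. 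Applying the inductive hypothesis $\Pf B[X \setminus \{i_1, i_j\}] = t^{|O_{X \setminus \{i_1, i_j\}}|}$ and factoring out $t^{|O_X|}$, the whole claim reduces to the polynomial identity $\sum_{j=2}^{|X|} (-1)^j t^{2 m_j} = 1$.

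Next, observe that $m_j$ equals the distance, in the skeleton tree obtained from $T$ by contracting each connected component of the odd-edge forest $T^{\mathrm{odd}}=(V,O_X)$, from the component containing $i_1$ to that containing $i_j$. Grouping the above sum by this distance reduces the identity to the key lemma: for every connected component $C$ of $T^{\mathrm{odd}}$,
\[
\sum_{j \,:\, i_j \in X \cap C} (-1)^j = 0.
\]
Granted this, the coefficient of $t^{2m}$ for $m \geq 1$ is a sum of such zeros over components at skeleton-distance $m$ from the component of $i_1$; and the coefficient of $t^0$ is the lemma applied to the component of $i_1$, minus the missing $j=1$ contribution, namely $0 - (-1)^1 = 1$.

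The main obstacle is the key lemma, which I would prove by a flow argument using the canonical matching $M^* = \{(i_{2l-1}, i_{2l})\}_{l=1}^{|X|/2}$. Orient each pair's unique tree path from its odd-indexed endpoint to its even-indexed endpoint; the resulting flow $f$ on $E(T)$ has, at every vertex $v$, net (inflow minus outflow) equal to $+1$ if $v$ equals some $i_{2l}$, $-1$ if $v$ equals some $i_{2l-1}$, and $0$ otherwise. Summing over the vertices of $C$ and using flow conservation, the net flow into $V(C)$ across its boundary in $T$ equals $\sum_{i_j \in X \cap C}(-1)^j$. Since this boundary consists entirely of non-odd edges, it suffices to show $f(e) = 0$ for every non-odd edge $e$. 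For such an $e$, the nicely-ordered condition forces the $X$-vertices on each side of $e$ to form a cyclic arc of even length, and a direct parity check on the positions of arc endpoints shows $M^*$ has either zero or two pairs crossing $e$, the two (when present) traversing $e$ in opposite directions and hence canceling in $f$.
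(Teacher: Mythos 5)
Your proof is correct, and while it shares the paper's skeleton (induction on $|X|$, expansion of the Pfaffian along the first element, the fact that subsets of a nicely-ordered set stay nicely-ordered, and the identity $O_{X\setminus\{i,j\}} = O_X \triangle P_{ij}$, which gives your degree formula $d(i_1,i_j)+|O_{X\setminus\{i_1,i_j\}}| = |O_X|+2m_j$), the cancellation mechanism is genuinely different. The paper proves a structural pairing lemma: $X$ can be partitioned into pairs $\{i_k,j_k\}$ of opposite index parity whose paths $P_{i_k j_k}$ partition $O_X$ (built greedily from a consecutive pair $i,i+1$ with $P_{i(i+1)}\subseteq O_X$, whose existence uses nicely-orderedness); the expansion terms then cancel exactly in pairs, since $P_{1 i_k}\setminus O_X = P_{1 j_k}\setminus O_X$, leaving only the partner of the first element, which contributes $t^{|O_X|}$. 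You instead cancel degree-by-degree: after reducing to $\sum_{j\ge 2}(-1)^j t^{2m_j}=1$ and noting that $m_j$ is constant on each component of the odd-edge forest, everything rests on your balance lemma that $\sum_{j:\, i_j\in C}(-1)^j = 0$ for every component $C$, proved by a flow/crossing-parity argument with the consecutive matching $\{i_{2l-1},i_{2l}\}$: a non-odd edge has the $X$-vertices on each side forming an even cyclic arc (since the tour crosses it at most twice), so either no pair or exactly two oppositely oriented pairs cross it, whence the flow vanishes on the boundary of $C$. Your balance lemma is in fact a consequence of the paper's pairing lemma (each of its pairs lies in one component and contributes $0$ to the alternating sum), so your route proves a weaker but still sufficient statement by a more local parity argument; what it buys is a clean reduction to a numerical identity and no need to construct a path-partition of $O_X$, while the paper's lemma yields the stronger term-by-term cancellation and is an appealing structural fact in its own right. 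In a written-up version you should spell out the cyclic-arc claim (a non-odd edge separating $X$ nontrivially is crossed by the tour exactly twice, so each side is a cyclic interval of positions) and the endpoint-parity case analysis (an even arc either consists of whole matching pairs, giving zero crossings, or splits exactly one pair at each end, and those two pairs traverse the edge in opposite directions); both steps are correct as sketched.
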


The both formulas are easily generalized to 
a tree metric, that is, a 
dissimilarity matrix that can be embeddable to an edge-weighted tree.
More precisely a {\em dissimilarity matrix} is  a nonnegative symmetric matrix $D \in \QQ^{n \times n}$ with zeros on  diagonal, and 
a {\em tree metric} is a dissimimlarity matrix $D$ 
such that there are a tree $T = (V,E)$, a positive edge weight $l$ on $E$, and a map 
$\varphi: \{1,2,\dots,n \} \to V$ such that 
$D_{ij}$ is equal to the sum of weights of edges of the unique path between $\varphi(i)$ and $\varphi(j)$. 
In this case, 
if $\varphi$ is injective, 
then we can regard $\{1,2,\dots,n \} \subseteq V$, 
and the formula of $\det (t^{D_{ij}})$ 
is obtained by 
replacing $|E_F|$ and $|O_X|$ by weighted sums $\sum_{e \in E_F} l(e)$ and $\sum_{e \in O_X} l(e)$, respectively.
(If $\varphi$ is not injective, then $\det (t^{D_{ij}}) = 0$.) 
The well-known tree metric theorem \cite{Buneman1974Ano} says 
that a dissimilarity matrix $D = (D_{ij})$ is a tree metric if and only if $D$ satisfies
\begin{flalign*}
{\bf [4PC]} \quad \quad \quad \quad D_{ij} + D_{kl} \leq \max \{ D_{ik} + D_{jl}, D_{il} + D_{jk} \} \quad (i,j,k,l \in \{1,2,\dots,n \} ) . &&
\end{flalign*}
This condition is called the {\it four-point condition} [4PC]. 
A symmetric matrix $W= (w_{ij}) \in \QQ^{n \times n}$ satisfying [4PC] (not necessarily a dissimilarity matrix) can be represented with a tree metric $D = (D_{ij})$ and a vector $p = (p_i) \in \QQ^n$ (defined by $p_i := w_{ii} / 2$) as
\begin{align}
w_{ij} = D_{ij} + p_i + p_j .\label{eq_Hij1}
\end{align}
Then we have 
\begin{align}
\det (t^{w_{ij}}) = t^{2 \sum_{k=1}^n p_k} \det(t^{D_{ij}}). \label{eq_Hij2}
\end{align}
Therefore our formulas are applicable to matrices with exponents satisfying [4PC]. 

The organization of this paper is as follows. 
In Section~\ref{sec_app}, we present applications of the formulas.
The space of tree metrics (called the {\em space of phylogenetic trees} in \cite{Billera2001Got}), and related spaces arise ubiquitously from the literature of tropical geometry: 
examples include the tropical grassmannian of rank $2$~\cite{Speyer2004TtG}, the Bergman fan of the matroid of a complete graph~\cite{Ardila2006TBc}, 
and the space of matrices with tropical rank $2$~\cite{Develin2004Tc}. 
In Section~\ref{subsec_trop_one}, we present yet another appearance of the space of tree metrics from tropicalization of the space of Hermite matrices of signature $(1,n-1)$. 
This type of matrix also has interest from theory of multivariate stable polynomials~\cite[Theorem 5.3]{Choe2004Hmp}. 
Recent studies~\cite{Branden2007Pwt, Branden2010DCa, Choe2004Hmp} explored an interesting link between stable polynomials, matroids, and related discrete concave functions.
In Section~\ref{subsection_QPw}, utilizing the formula (\ref{eq_main_det_f}), we establish a correspondence between tree metrics $(D_{ij})$ and quadratic stable polynomials $z^{\top}(t^{D_{ij}}) z$ in $\Rt$.
Our formula also sheds a new insight on 
the {\em dissimilarity map} $X \mapsto |E_X|$ 
of a tree $T =(V,E)$~\cite{Pachter2004Rtf}. 
The dissimilarity map of a tree 
has a significance in phylogenetic analysis as well as has interests from tropical geometry and representation of valuated matroids~\cite{Cools2009Otr, Giraldo2010Dvo, Manon2010Dmo}.
Observe that our leading term formula (\ref{eq_main_det_lead}) gives a new type of representation of the dissimilarity map by the degree of principal minors of a symmetric matrix.
In Section~\ref{subsec_val}, we address this subject. 
In Section~\ref{sec_proofs}, we prove Theorems~\ref{thm_main} and \ref{thm_main_pf}.

\section{Applications}\label{sec_app}
To describe applications of our formulas, let us recall the notion of Puiseux series.
A {\it Puiseux series} in the indeterminate $t$ and a field $K (= {\bf R}, {\bf C})$ is  
a formal series of the form $\sum_{i = i_0}^{ - \infty} a_i t^{i/k}$,
where $i_0$ and $k>0$ are integers and 
each coefficient $a_i$ is an element in $K$. 
Let $K\{t\}$ denote the field of all Puiseux series 
in the indeterminate $t$ and a field $K$.
Define a binary relation $>$ on ${\bf R} \{t\}$ by 
$x > y$ if the leading coefficient of $x - y$ is positive.
By this relation, ${\bf R}\{t\}$ becomes an ordered field. 
Any statement in $\RR$ is naturally formulated in $\Rt$.
From Tarski's principle, any true (first order) statement in $\RR$ is also true in $\Rt$; see Appendix \ref{subsection_first}. 
Hence Corollary~\ref{cor_signature1} is true in $\Rt$.

Let $\bar{\QQ} := \QQ \cup \{- \infty \}$. The {\it valuation} $\val: K \{ t \} \to \bar{\QQ}$ 
is defined by 
\[
\val(x) := \max \{ j/k \mid a_j \neq 0\} \quad \left( x = \sum a_i t^{i/k} \in K\{t\} \right),
\]
where $\val(0) := - \infty$. Namely $\val(x)$ is the degree of the leading term of $x$. 
Define $\val: K \{ t \}^n \to \bar{\QQ}^n$ as $\val (z) := (\val(z_{1}) ,\dots,\val(z_{n})  )$ for $z \in K \{ t \}^n$. 
Through this map, an
algebraic object ${\cal V}$ in $K\{t\}^n$  is transformed to a polyhedral object $\val({\cal V})$  in $\bar {\QQ}^n$, and
an algebraic condition $c_0 z ^{b_0} =  \sum_{i} c_i z^{b_i}$ 
satisfied by ${\cal V}$ 
is transformed to 
a max-plus condition $\val(c_0) + \langle b_0, v\rangle \leq 
\max_{i} \{ \val(c_i) + \langle b_i, v \rangle \}$ satisfied by $\val ({\cal V})$,
which is obtained 
by replacing $(+ , \times )$ with $(\max , +)$ in the original condition.
We will refer to this process 
as a {\em tropicalization}. 
This is a basic idea in tropical geometry;
see~\cite{Speyer2004TtG}.

For $x = y + \sqrt{-1} z \in \Ct$ where $y , z \in \Rt$, the complex conjugate $\overline{x}$ of $x$
is defined as $\overline{x} := y - \sqrt{-1} z$, and $x \overline{x}$ is 
denoted by $|x|^2$.
By an Hermite matrix we mean a matrix $A=(a_{ij})$ with $a_{ij} = \overline{a_{ji}} \in \Ct$.

\subsection{Tropicalizing Hermite matrices with nonnegative
diagonals and signature $(1, n-1)$}\label{subsec_trop_one}
Let ${\cal M}_n$ be the set of $n \times n$ Hermite matrices on $\Ct$ 
having signature $(1,n-1)$ and nonnegative diagonal entries.
Let $\bar{{\cal M}}_n$ be the closure of ${\cal M}_n$, that is, the set of Hermite matrices having nonnegative diagonal entries and at most one positive eigenvalue. 
We regard a symmetric matrix $M = (m_{ij})$ of size $n$ as a vector of dimension $n(n+1) / 2$. 
Then the tropicalization of ${\cal M}_n$ is essentially the space of tree metrics as follows.
\begin{thm}\label{thm_tropS_F}
For a symmetric matrix 
$W = (w_{ij}) \in \bar{\QQ}^{n(n+1/2)}$, the following conditions are equivalent:
\begin{itemize}
\item[{\rm (1)}] $W$ belongs to $\val (\bar{{\cal M}}_n)$.
\item[{\rm (2)}] $W$ satisfies {\rm [4PC]}.
\end{itemize}
In particular, $(t^{w_{ij}}) \in \bar{{\cal M}}_n$ if and only if $W$ satisfies {\rm [4PC]}.
\end{thm}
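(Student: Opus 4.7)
The plan is to prove the two implications separately; the ``in particular'' clause is then immediate since $\val(t^{w_{ij}}) = W$.

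For $(2)\Rightarrow(1)$, given $W$ satisfying [4PC], I will invoke (\ref{eq_Hij1}) to write $w_{ij} = D_{ij} + p_i + p_j$ with $D$ a tree metric and $p_i = w_{ii}/2$; equation (\ref{eq_Hij2}) then displays $(t^{w_{ij}})$ as the congruence $\diag(t^{p_i})\cdot(t^{D_{ij}})\cdot\diag(t^{p_i})$. The tree-metric extension of Corollary~\ref{cor_signature1} (stated at the end of Section~\ref{sec_intro}) says that $(t^{D_{ij}})$ has at most one positive eigenvalue, and the diagonal factor is nonsingular over $\Rt$ when all $p_i$ are finite, so congruence preserves signature and hence $(t^{w_{ij}}) \in {\cal M}_n \subseteq \bar{{\cal M}}_n$, giving $W \in \val(\bar{{\cal M}}_n)$. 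The case when some $w_{ii} = -\infty$, or in which the tree embedding is non-injective, is handled by an approximation argument, since $\bar{{\cal M}}_n$ is closed.

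For $(1)\Rightarrow(2)$, let $M \in \bar{{\cal M}}_n$ with $\val(M) = W$. By Cauchy interlacing for Hermite matrices, every $4 \times 4$ principal submatrix of $M$ lies in $\bar{{\cal M}}_4$, so it suffices to verify [4PC] for one quadruple at a time, reducing the problem to $n = 4$. For such a submatrix the signature is $(1,3)$ or more degenerate, whence $\det M \leq 0$. I will expand $\det M$ as a signed sum over $S_4$ grouped by cycle type; the three $(2,2)$-cycles contribute $+P_1^2 + P_2^2 + P_3^2$, where $P_1 = m_{12}m_{34}$, $P_2 = m_{13}m_{24}$, $P_3 = m_{14}m_{23}$ are the three ``quartet products'' whose vals are precisely the three sides of [4PC]. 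Assuming for contradiction that [4PC] fails, without loss of generality $p_1 := \val P_1 > p_2,p_3$, I aim to show that the leading (top-val) term of $\det M$ has strictly positive coefficient, contradicting $\det M \leq 0$.

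Using the $2 \times 2$ inequality $2 w_{ij} \geq w_{ii} + w_{jj}$ (which follows from $\det M[\{i,j\}] \leq 0$), I can bound the val of every monomial in $\det M$ other than $P_1^2$ by $2p_1$; $P_1^2$ itself has val exactly $2p_1$ with coefficient $+1$. Generically this makes $P_1^2$ the unique top-val monomial, yielding the desired contradiction. The main obstacle is the degenerate subcase where several monomials share val $2p_1$ --- that is, when some $2 \times 2$ inequality is tight or when a $3$-cycle monomial coincidentally reaches val $2p_1$. Handling this requires computing the combined leading coefficient using the signs of the leading terms of $\det M[\{i,j\}]$ and $\det M[\{i,j,k\}]$, both of which are controlled by $M \in \bar{{\cal M}}_n$. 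A more robust alternative is an approximation argument: perturb $M$ within ${\cal M}_n$ to break all tropical ties, apply the generic contradiction, and conclude by the closedness of both $\val(\bar{{\cal M}}_n)$ and of the four-point condition.
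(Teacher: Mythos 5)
There is a genuine gap in your $(1)\Rightarrow(2)$ argument, at the step ``Using the $2\times 2$ inequality $2w_{ij}\geq w_{ii}+w_{jj}$, I can bound the val of every monomial in $\det M$ other than $P_1^2$ by $2p_1$.'' This is false for the $3$-cycle monomials such as $m_{11}m_{23}m_{34}\overline{m_{24}}$: its val $w_{11}+w_{23}+w_{34}+w_{24}$ is not controlled by the $2\times 2$ inequalities together with the failure of [4PC]. Concretely, take $w_{11}=2a$, $w_{22}=w_{33}=w_{44}=0$, $w_{12}=w_{13}=w_{14}=a$, $w_{23}=w_{24}=2$, $w_{34}=3$: all inequalities $2w_{ij}\geq w_{ii}+w_{jj}$ hold, $p_1=a+3>p_2=p_3=a+2$, yet the $3$-cycle monomial has val $2a+7>2p_1=2a+6$. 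So under your hypotheses $P_1^2$ need not be the top-val monomial, and your ``generic contradiction'' does not go through; the perturbation fallback does not repair this, because the problem is not a tie at $2p_1$ but an exceedance. What is needed is exactly the degree-$3$ input the paper extracts first: from $\det M[\{i,k,l\}]\geq 0$ one gets $w_{ii}+w_{kl}\leq w_{ik}+w_{il}$, and with this every diagonal-containing monomial is dominated by a $4$-cycle monomial, whose val is $p_1+p_2$, $p_1+p_3$ or $p_2+p_3<2p_1$. Note also that this degree-$3$ inequality is itself one of the instances of [4PC] (with a repeated index), so your ``reduce to one distinct quadruple at a time'' does not verify the full condition: the paper must and does prove the cases (i) $w_{ii}+w_{kk}\leq 2w_{ik}$ and (ii) $w_{ii}+w_{kl}\leq w_{ik}+w_{il}$ separately, before the distinct-quadruple case.

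Two smaller points. Your claim ``signature is $(1,3)$ or more degenerate, whence $\det M\leq 0$'' for a $4\times 4$ principal submatrix needs the paper's observation that a principal submatrix with no positive eigenvalue must be the zero matrix (negative semidefinite plus nonnegative diagonal forces zero diagonal, then the $2\times 2$ minors force zero off-diagonal); otherwise a negative-definite block would give $\det>0$ and the sign claim fails. In $(2)\Rightarrow(1)$ your outline matches the paper for finite entries (decomposition $w_{ij}=D_{ij}+p_i+p_j$, congruence by $\diag(t^{p_i})$, Corollary~\ref{cor_signature1}), but you only mention $w_{ii}=-\infty$ and non-injective embeddings: off-diagonal entries $w_{ij}=-\infty$ also occur and admit no tree-metric decomposition; the paper disposes of them by showing they force two rows of $(t^{w_{ij}})$ to be linearly dependent and inducting on $n$, and it justifies the limiting step not by an unexplained ``closedness of $\bar{{\cal M}}_n$'' over $\Rt$ but by transferring continuity of eigenvalues via Tarski's principle. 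These parts of your sketch should be filled in along those lines.
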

\begin{proof}
$(1) \Rightarrow (2)$. 
Since $W \in \val (\bar{{\cal M}}_n)$, there is a matrix $M =(m_{ij}) \in \bar{{\cal M}}_n$ such that $w_{ij} = \val (m_{ij})$ for $i,j = 1,2,\dots,n$. 
Every principal submatrix $M[X]$ has at most one positive eigenvalue, and if $M[X]$ has no positive eigenvalue, then $M[X]$ must be a zero matrix 
(since all diagonal entries of $M$ must be zero, 
and all $2 \times 2$ principal minors of $M$ must be nonnegative).
From this we have the following:
\begin{itemize}
\item[($*$)] $\det M[X] \geq 0$ if $|X|$ is odd, and $\det M[X]  \leq 0$ if $|X|$ is even.
\end{itemize}
We show that  $W$ satisfies [4PC]:
\begin{itemize}
\item[(i)] $w_{ii} + w_{kk} \leq 2 w_{ik}$ for distinct $i,k$.
\item[(ii)] $w_{ii} + w_{kl} \leq w_{ik} + w_{il}$ for distinct $i,k,l$.
\item[(iii)] $w_{ij} + w_{kl} \leq \max\{ w_{ik} + w_{jl}, w_{il} + w_{jk} \}$ for distinct $i,j,k,l$.
\end{itemize}

(i): By ($*$) we have $\det M[\{ i,k \}] = m_{ii} m_{kk} - |m_{ik}|^2 \leq 0$.
Since $m_{ii}$ and $m_{kk}$ are nonnegative, 
it must hold that $w_{ii} + w_{kk} = \val(m_{ii} m_{kk} ) \leq \val( |m_{ik}|^2) = 2 w_{ik}$.

(ii): $\det M[\{i,k,l\}] $ is equal to
\begin{align}
 m_{ii} m_{kk} m_{ll} + (m_{ik} \overline{m_{il}} m_{kl} + \overline{m_{ik}} m_{il} \overline{ m_{kl}}) 
- m_{ii}|m_{kl}|^2 - m_{kk}|m_{il}|^2 - m_{ll}|m_{ik}|^2 . \label{detMikl}
\end{align}
From (i), $\val(m_{ii} m_{kk} m_{ll}) \leq \val(m_{ik}\overline{ m_{il}} m_{kl}) = \val( \overline{m_{ik}} m_{il} \overline{m_{kl}})$.
Since $\det M[\{i,k,l\}]   \geq 0$ by ($*$), 
and the last three terms in (\ref{detMikl}) are nonpositive, 
it must hold that 
\[
\val(m_{ii}|m_{kl}|^2) \leq \max\{\val(m_{ii} m_{kk} m_{ll}),  \val (m_{ik} \overline{m_{il}} m_{kl} + \overline{m_{ik}} m_{il} \overline{m_{kl}})\} \leq \val (m_{ik} m_{il} m_{kl}).
\]
Therefore we obtain (ii).

(iii): Consider the expansion of $\det M[\{i,j,k,l\}]$.
For a term containing $m_{i'i'}m_{j'k'}$ in the expansion, 
the term obtained by replacing $m_{i'i'}m_{j'k'}$ with $\overline{m_{i'j'}}m_{i'k'}$ 
also appears in the expansion and has degree at least the original by (i) and (ii). 
From this we see that the degree of a term including a diagonal element $m_{i'i'}$ 
is no more than the degree of $m_{i'j'} m_{j'k'} m_{k'l'} m_{l'i'}$ 
for some different $i',j',k',l'$. 
Observe that $\val (m_{i'j'} m_{j'k'} m_{k'l'} m_{l'i'})$ 
is equal to $(\val (|m_{i' j'}|^2 |m_{k'l'}|^2) + \val(|m_{i' l'}|^2 |m_{j'k'}|^2))/2$.
Therefore, if [4PC] is violated, say, $w_{ij} + w_{kl} > \max\{w_{ik } + w_{jl} , w_{il} + w_{jk}\}$, then $|m_{ij}|^2 |m_{kl}|^2$ becomes the unique leading term in $\det M[\{i,j,k,l\}] $. This implies that $\det M[\{i,j,k,l\}] >0$, which contradicts ($*$). Thus $W$ satisfies [4PC].

$(2) \Rightarrow (1)$. 
It suffices to show that $M := (t^{w_{ij}})  $ belongs to $\bar{{\cal M}}_n$. 
We use the induction on $n$. If $n=1$, then the statement obviously holds. 
Suppose $n>1$. If $M$ is singular, then some $i$-th column (row) can be represented as a linear combination of other column (row). 
Hence the signature of $M$ is equal to
that of the matrix $M'$ obtained by deleting $i$-th column and row; we can apply the induction.
We can assume that $M$ is nonsingular. 
If $w_{ij} = -\infty$ for distinct $i,j$, then [4PC] implies $w_{ik} + w_{jl} \leq w_{il} + w_{jk}$. 
Exchanging the role of $k$ and $l$, we have $w_{ik} + w_{jl} = w_{il} + w_{jk}$. 
This means $m_{ik} m_{jl} = m_{il} m_{jk}$. 
Hence the $i$-th row and the $j$-th row are linearly dependent, and this contradicts the nonsingularity assumption of $M$.
Thus $W$ has $- \infty$ only on diagonals (if it exists). 
By replacing $-\infty$ by a sufficiently small $\sigma \in \QQ$, we obtain $W' = (w'_{ij})$. Then $W'$ satisfies [4PC] (when $\sigma < \min_{i \neq j} \{w_{ij} \}$).
Similarly, $M'$ is defined by $M' := (t^{w'_{ij}})$. 
Then we have $\| M - M' \|_{\infty} \leq t^{\sigma}$. 
From Tarski's principle, the continuity of eigenvalue also holds on $\Rt$. 
Therefore the signatures of $M$ and $M'$ must be the same. 
Hence it is enough to consider the case that $M$ is nonsingular and $W$ has no $-\infty$ entry. 
Then  as the equation (\ref{eq_Hij1}), there are a tree metric $D$ and a vector $p$ such that $w_{ij} = D_{ij} + p_{i} + p_{j}$.
The signatures of $M$ and $(t^{D_{ij}})$ are the same. Since $M$ is nonsingular, the embedding map to the corresponding tree must be injective. Thus we can apply Corollary \ref{cor_signature1} for $(t^{D_{ij}})$, and conclude that the signature of $M$ is $(1,n-1)$.
\end{proof}

\subsection{Quadratic polynomials with the half-plane property}\label{subsection_QPw}
A real multivariate polynomial
$P \in \RR[z_1,z_2,..., z_n]$
is said to {\em have the half-plane property}
if P has no root $z=(z_1, z_2, \dots, z_n)$ with $\Real(z_i) >0 \  (i=1,2,\dots,n)$.
Such a polynomial 
is also called an {\em HPP polynomial}.
Choe, Oxley, Sokal and Wagner~\cite{Choe2004Hmp} and 
Br\"{a}nd\'en~\cite{Branden2007Pwt, Branden2010DCa}
explored an interesting link between the half-plane property 
and matroidal convexity.
A set $B$ of integer vectors in ${\bf Z}^n_{+}$
is called  {\em M-convex} \cite{Murota2003DCA} if $B$ satisfies the following property:
\begin{description}
\item[{[EXC]}]
For $u,v \in B$ and 
$i \in \{1,2,\dots,n \}$ with $u_i < v_i$, 
there exists $j \in \{1,2,\dots,n \}$ such that $u_j > v_j$ and
\[
u + e_i - e_j, v+ e_j - e_i \in B.
\]
\end{description}
An M-convex set is nothing but the base set of an integral polymatroid \cite{Edmonds1970Sfm}. 
In addition, if $B$ belongs to $\{0,1\}^n$, then $B$ is the set of characteristic vectors of bases of a matroid.~\footnote{\cite[Section 7.1]{Choe2004Hmp} refers to an M-convex set as a constant sum jump system.}
An M-convex set $B$ lies on a hyperplane $\{x \in \RR^n \mid \sum_{i=1}^n x_i = k \}$ for some $k \in \ZZ_+$, and this $k$ is called the {\em rank} of $B$. 
The {\em support} of a polynomial $P(z)= \sum a_u z^u$ is the set of vectors $u \in \ZZ_+^n$ such that $a_u \neq 0$, where $z^u := z_{1}^{u_1} \cdots z_{n}^{u_n}$.
\begin{thm}[Choe--Oxley--Sokal--Wagner {\cite[Theorem 7.2]{Choe2004Hmp}}]\label{thm_HPP_Mset}
For every homogeneous HPP polynomial $P$, 
the support of $P$ is an M-convex set. 
\end{thm}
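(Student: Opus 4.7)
My plan is to prove this by reducing, via polarization, to the case of matroids, and then verifying the matroid exchange axiom by combining induction on the number of variables with HPP-preserving specialization operations.

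The first step is to establish closure of the HPP class under several elementary operations: permutation of variables, positive rescaling $z_i \mapsto \lambda z_i$, specialization $z_i \mapsto 0$, and partial differentiation $\partial/\partial z_i$ all preserve HPP (the last via a Hurwitz-type limit argument on stable polynomials). The key additional operation is \emph{polarization}: given $P$ of maximum degree $d_i$ in $z_i$, replace each $z_i$ by $d_i$ new variables $z_{i,1},\dots,z_{i,d_i}$ and form the unique polynomial $\Pi P$, multi-affine and symmetric within each block, whose restriction to the diagonals $z_{i,1}=\cdots=z_{i,d_i}=z_i$ equals $P$. The Grace--Walsh--Szeg\H{o} theorem guarantees that $\Pi P$ is HPP whenever $P$ is. This reduces the problem to the multi-affine case: writing $\tilde P := \Pi P$, one has $\supp(\tilde P) \subseteq \{0,1\}^N$ where $N := \sum_i d_i$, and $\supp(P)$ is the image of $\supp(\tilde P)$ under the block-sum projection $(x_{i,j})_{i,j}\mapsto \bigl(\sum_{j} x_{i,j}\bigr)_i$. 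Since the image of a matroid base family under such a projection is always an M-convex set (the standard polymatroid-from-matroid construction), it suffices to show that $\supp(\tilde P)$ is the base set of a matroid.

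To verify the matroid exchange axiom for a multi-affine homogeneous HPP polynomial $\tilde P$, I would induct on the number of variables. Writing $\tilde P = z_i A + B$ with $A = \partial_i \tilde P$ and $B = \tilde P|_{z_i=0}$, both $A$ and $B$ are multi-affine HPP polynomials in fewer variables, so by induction $\supp(A)$ and $\supp(B)$ are matroid base sets. The remaining content is to show the mixed exchange between them: for $B_0 \in \supp(A)$ and $B_0' \in \supp(B)$, one needs an element $j \in B_0' \setminus B_0$ such that $B_0 + j \in \supp(B)$ and $B_0' - j \in \supp(A)$, or the analogous symmetric exchange. This mixed exchange is not automatic from the two matroid structures on their own and must use the HPP of $\tilde P$; the idea is to apply further HPP-preserving specializations that leave only the variables $z_i$ and $z_j$ alive, producing a bivariate HPP polynomial where the coefficient inequalities implied by HPP in one variable (adjacent coefficients cannot jointly vanish when terms exist on both sides) force the required incidences.

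The main obstacle is the Grace--Walsh--Szeg\H{o} polarization theorem itself: this is a nontrivial complex-analytic result whose proof proceeds by iterating a one-variable lemma on how roots of a symmetric bivariate polynomial relate to those on the diagonal. A secondary difficulty lies in executing the matroid exchange argument cleanly; the specialization to two variables must be carried out so that the coefficients witnessing $B_0$ and $B_0'$ both survive, which requires ordering the partial derivatives and zero substitutions according to the symmetric difference $B_0 \triangle B_0'$ and tracking which monomials remain nonzero at each stage.
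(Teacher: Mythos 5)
You should first note that the paper itself contains no proof of this statement: it is quoted verbatim from Choe--Oxley--Sokal--Wagner (their Theorem 7.2), so your argument has to stand on its own against the known proof. The outer layers of your plan are sound and standard: closure of the HPP class under permutation, positive scaling, $z_i \mapsto 0$ and $\partial/\partial z_i$; the Grace--Walsh--Szeg\H{o} polarization step (the right half-plane is a convex circular region, and polarization multiplies each coefficient by a positive binomial factor, so supports correspond exactly and the reduction to the multi-affine homogeneous case is legitimate); and the fact that the block-sum image of a matroid base family is M-convex.

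The genuine gap is in the only step that carries the content of the theorem: the basis-exchange property for a multi-affine homogeneous HPP polynomial. Writing $\tilde P = z_i A + B$ and inducting gives you two matroids, but, as you concede, the ``mixed exchange'' between them is exactly the theorem, and the mechanism you propose for it does not work as described. First, the exchange axiom is an existential statement over $j \in B_0'\setminus B_0$: any specialization that keeps only $z_i$ and one chosen $z_j$ alive (setting the other candidate variables to $0$ and differentiating the rest) has already committed to a guess for $j$, and when that guess is wrong the specialized polynomial may be identically zero, in which case its HPP gives no information --- this is precisely the degenerate case you need to exclude, and nothing in your setup locates a correct $j$. Second, the coefficient lemma you invoke (``adjacent coefficients cannot jointly vanish when terms exist on both sides'') is a no-internal-zeros statement valid for bivariate \emph{homogeneous} HPP polynomials with same-phase coefficients, equivalently univariate polynomials with only real nonpositive roots; the specializations you describe (substituting constants, or retaining two of many live variables) destroy homogeneity, so the lemma does not apply in the form you use it. What is missing is the same-phase theorem of Choe--Oxley--Sokal--Wagner --- all nonzero coefficients of a homogeneous HPP polynomial share a phase, so one may assume they are nonnegative --- which is what makes substitution of positive constants and aggregation of variables cancellation-free and lets one read off support information from homogeneous bivariate restrictions; their exchange argument is built on this. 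Without that ingredient (or a substitute), your plan reduces the theorem to a claim essentially equivalent to the theorem itself, so the proposal as written has a genuine gap.
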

The converse of this theorem is not true in general: there is no HPP polynomial having Fano matroid support~\cite{Branden2007Pwt}.
In rank-$2$ case, however, the following is known.
\begin{thm}[Choe--Oxley--Sokal--Wagner {\cite[Corollary 5.4]{Choe2004Hmp}}]\label{thm_rank2_hpp}
For every M-convex set $B$ of rank $2$, the polynomial $P_B(z)= \sum_{1 \leq i,j \leq n : e_i + e_j \in B} z_i z_j$ has the half-plane property.
\end{thm}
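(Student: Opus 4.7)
The plan is to reduce the claim to Theorem~\ref{thm_tropS_F} via the Choe--Oxley--Sokal--Wagner characterization of quadratic half-plane polynomials (\cite[Theorem 5.3]{Choe2004Hmp}, already cited in the introduction), which asserts that a quadratic form $z^{\top} A z$ with nonnegative real coefficients has the half-plane property if and only if the symmetric matrix $A$ has at most one positive eigenvalue. Accordingly, it suffices to exhibit the $0/1$ matrix $M = (m_{ij})$ defined by $m_{ij} := 1$ iff $e_i + e_j \in B$ as a member of $\bar{{\cal M}}_n$.

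First I would encode $B$ by a symmetric exponent matrix $W = (w_{ij}) \in \bar{\QQ}^{n \times n}$, setting $w_{ij} := 0$ when $e_i + e_j \in B$ and $w_{ij} := -\infty$ otherwise, so that $M = (t^{w_{ij}})$ under the convention $t^{-\infty} = 0$ and its diagonal is nonnegative. The crucial step is to check that $W$ satisfies the four-point condition \textbf{[4PC]}. Since each $w_{ij}$ is either $0$ or $-\infty$, the only nontrivial instances occur when both $e_i + e_j$ and $e_k + e_l$ lie in $B$, in which case \textbf{[4PC]} demands that one of the swapped configurations $\{e_i + e_k,\, e_j + e_l\}$ or $\{e_i + e_l,\, e_j + e_k\}$ is entirely contained in $B$. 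This is exactly the conclusion of the exchange axiom \textbf{[EXC]} applied to $u = e_i + e_j$ and $v = e_k + e_l$ with an exchange coordinate chosen from $\{k, l\}$ where $v$ strictly dominates $u$.

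Once \textbf{[4PC]} is confirmed, Theorem~\ref{thm_tropS_F} gives $M = (t^{w_{ij}}) \in \bar{{\cal M}}_n$, so $M$ has at most one positive eigenvalue, and the CDSW quadratic characterization immediately finishes the proof: $P_B(z) = z^{\top} M z$ has the half-plane property. No separate specialization in $t$ is required, since the matrix $(t^{w_{ij}})$ is literally constant in $t$ (only the entries $0$ and $1$ appear), so the $\Rt$-half-plane conclusion restricts automatically to the $\RR$-half-plane property on the constant fibres $\CC^n \subseteq \Ct^n$.

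The main obstacle I expect is the \textbf{[EXC]}-to-\textbf{[4PC]} translation in degenerate configurations: when one of $u, v$ is of the doubled form $2 e_a$, or when $\{i, j\}$ and $\{k, l\}$ share an index, the natural choice of exchange coordinate may be forced to lie inside $\{i, j\}$, and the swap produced by \textbf{[EXC]} may itself involve a doubled vector. These cases must be matched separately against the three subcases (i)--(iii) of \textbf{[4PC]} used in the proof of Theorem~\ref{thm_tropS_F}, and the diagonal exponents $w_{ii}$ (recording whether $2 e_i \in B$) must be incorporated consistently so that the verification remains valid across all index coincidences.
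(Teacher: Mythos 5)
Your proposal is correct and is essentially the route the paper itself takes: the statement is quoted from Choe--Oxley--Sokal--Wagner, but the paper's own derivation of its generalization (Corollary~\ref{them_mainHPP}) is exactly your pipeline --- translate the exchange axiom into [4PC] for the exponent matrix, invoke Theorem~\ref{thm_tropS_F} to get at most one positive eigenvalue, and finish with the quadratic half-plane criterion (Theorem~\ref{prop_half_equiv}); your $0/{-\infty}$ encoding is just the special case of an M-concave indicator function of $B$. The degenerate instances you flag do go through (for example, if $2e_i \in B$ and $e_k+e_l \in B$ with $i,k,l$ distinct, the only admissible exchange index in [EXC] is $i$, which yields both $e_i+e_k \in B$ and $e_i+e_l \in B$, i.e.\ the instance $w_{ii}+w_{kl} \leq w_{ik}+w_{il}$ of [4PC], while instances with $\{i,j\}\cap\{k,l\}\neq\emptyset$ are trivially satisfied), so there is no gap.
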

A key ingredient of their proof is the following.
\begin{thm}[Choe--Oxley--Sokal--Wagner {\cite[Theorem 5.3]{Choe2004Hmp}}]\label{prop_half_equiv}
For a nonnegative real symmetric matrix $A$, 
the following conditions are equivalent:
\begin{itemize}
\item[{\rm (1)}] $z^{\top}Az$ has the half-plane property.
\item[{\rm (2)}] $A$ has at most one positive eigenvalue.
\end{itemize}
\end{thm}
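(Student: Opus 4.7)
My plan is to convert the half-plane property for $P(z) = z^{\top} A z$ into a real linear-algebraic condition and then apply Sylvester's law of inertia, crucially exploiting the entrywise nonnegativity of $A$.

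First I would write $z = y + i x$ with $y, x \in \RR^n$. Since $A$ is real and symmetric, a direct computation yields
\[
z^{\top} A z = (y^{\top} A y - x^{\top} A x) + 2 i\, y^{\top} A x,
\]
so the existence of a root $z$ with $\Real(z_j) > 0$ for every $j$ is equivalent to the existence of $y \in \RR^n$ with $y > 0$ entrywise and $x \in \RR^n$ satisfying $y^{\top} A y = x^{\top} A x$ and $y^{\top} A x = 0$.

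Next, since $A$ is entrywise nonnegative and $y > 0$, we have $y^{\top} A y = \sum_{i,j} a_{ij} y_i y_j \geq 0$, with equality only if $A = 0$. Assuming $A \neq 0$ (the zero matrix being a degenerate edge case under the convention that the zero polynomial fails HPP), we obtain $y^{\top} A y > 0$. Let $(p, q, r)$ denote the signature of $A$. Because $y^{\top} A y > 0$, the decomposition $\RR^n = \RR y \oplus H_y$ with $H_y := \{ x \in \RR^n : y^{\top} A x = 0 \}$ is $A$-orthogonal, so by Sylvester's law of inertia the restriction of the quadratic form to $H_y$ has signature $(p-1,\, q,\, r)$. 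Hence solving $x^{\top} A x = y^{\top} A y > 0$ with $x \in H_y$ is feasible if and only if the restricted form admits a positive direction, i.e., if and only if $p \geq 2$.

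Combining both implications: if $p \geq 2$, then taking $y = {\bf 1}$ together with a suitably scaled positive direction $x \in H_{\bf 1}$ exhibits a root $z = {\bf 1} + i x$ of $P$ with all real parts equal to $1$, contradicting HPP; conversely, if $p \leq 1$, no admissible pair $(y, x)$ can exist for any $y > 0$, so $P$ has HPP. The one subtlety I anticipate is the potential degeneracy $y^{\top} A y = 0$, but the entrywise nonnegativity of $A$ eliminates it at once on the positive orthant---this is precisely where that hypothesis is essential, and without it one would need a more delicate perturbation or approximation argument.
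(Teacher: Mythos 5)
Your proof is correct, but note that there is nothing in the paper to compare it against: Theorem~\ref{prop_half_equiv} is imported verbatim from Choe--Oxley--Sokal--Wagner and used as a black box (then transferred to $\Rt$ via Tarski's principle), so the paper contains no proof of this statement. Your argument is a clean, self-contained substitute. The reduction of a forbidden root $z=y+\sqrt{-1}x$ to the system $y^{\top}Ay=x^{\top}Ax$, $y^{\top}Ax=0$ with $y>0$ entrywise is exactly right, and the inertia step is sound: entrywise nonnegativity gives $y^{\top}Ay>0$ for $y>0$, hence $Ay\neq 0$, $\dim H_y=n-1$, $y\notin H_y$, and inertia is additive over the $A$-orthogonal splitting $\RR^n=\RR y\oplus H_y$, so the restricted form has exactly $p-1$ positive squares. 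This also correctly isolates the only place where the nonnegativity hypothesis enters. In content your argument is close to the classical route for such forms (if $A$ has at most one positive eigenvalue and $y^{\top}Ay>0$, the reverse Cauchy--Schwarz inequality $(y^{\top}Ax)^2\geq (y^{\top}Ay)(x^{\top}Ax)$ holds, which is precisely what your restricted-signature computation encodes), but the orthogonal-decomposition formulation has the advantage of yielding both implications from a single picture. Two small points to make explicit: when $p\leq 1$ and $A\neq 0$, the statement ``restricted signature $(p-1,q,r)$'' presupposes $y^{\top}Ay>0$, which already forces $p\geq 1$ (indeed ${\bf 1}^{\top}A{\bf 1}>0$), so the relevant case is $p=1$ with a negative semidefinite restriction; and the case $A=0$ is consistent with the theorem only under the convention that the identically zero polynomial counts as having the half-plane property --- under the definition used in this paper it does not --- so that case should be settled by stating the convention rather than set aside parenthetically.
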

Br\"{a}nd\'en \cite{Branden2010DCa} considered HPP polynomials on the field of Puiseux series. 
Since $\Rt$ is an ordered field, the half-plane property is also defined on $\Ct$. 
Namely, $P \in \Rt [z_1, z_2, \dots,z_n]$ is said to have the half-plane property if $P$ has no root $z$ with $\Real(z_i) > 0 \  (i=1,2,\dots,n)$.
For a polynomial $P = \sum a_u z^u$, 
define a function $\val_P$ on $\ZZ_+^n$ by
\[
\textstyle \val_P (u) := \val (a_u) \quad (u \in \ZZ_+^n).
\]
Again $\val_P$ has a matroidal concavity. 
A function $f: \ZZ_+^n \to \bar{\QQ}$ 
is called  {\em M-concave} \cite{Murota2003DCA} if 
\begin{description}
\item[{[M-EXC]}]
for $u,v \in \ZZ_+^n$ and 
$i \in \{1,2,\dots,n \}$ with $u_i < v_i$, 
there exists $j \in \{1,2,\dots,n \}$ such that $u_j > v_j$ and
\[
f(u) + f(v) \leq f(u + e_i - e_j) + f(v_j+ e_j - e_i).
\]
\end{description}
Note that if $f$ is an M-concave function, then the domain of $f$ is the M-convex set~\cite[Proposition 6.1]{Murota2003DCA}, where the {\em domain} is the set of elements $u$ such that $f(u) > -\infty$. Therefore we define the {\em rank} of an M-concave function as the rank of the domain. If the domain of $f$ is contained by $\{0,1\}^n$, then $f$ is a {\it valuated matroid}~\cite{Dress1992Vm}; see Section \ref{subsec_val}. 
\begin{thm}[A corollary of Br\"{a}nd\'en {\cite[Theorem 4]{Branden2010DCa}}]
For every homogeneous HPP polynomial $P$, 
$\val_P$ is an M-concave function.
\end{thm}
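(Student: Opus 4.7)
The plan is to deduce this as a tropicalization statement from Brändén's Theorem 4 (which guarantees that the half-plane property is preserved under the operation of taking ``initial forms'' with respect to a weight vector $w \in \RR^n$ on the Puiseux valuations). Concretely, for $w \in \RR^n$ let
\[
M(w) := \max_{u \in \supp(P)} \{\val_P(u) + \langle w, u\rangle\},\qquad
P_w(z) := \sum_{u : \val_P(u)+\langle w,u\rangle = M(w)}\!\! \mathrm{lead}(a_u)\, z^u \in \RR[z],
\]
where $\mathrm{lead}(a_u)$ denotes the leading coefficient of $a_u \in \Rt$. Brändén's Theorem 4 (in the real HPP case) ensures that $P_w$ is a (nonzero) homogeneous HPP polynomial over $\RR$; in particular $\supp(P_w) \ne \emptyset$.

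The proof then consists of three short steps. First, I would recall the following standard characterization (see Murota, \emph{Discrete Convex Analysis}): a function $f : \ZZ_+^n \to \bar{\QQ}$ with nonempty domain is M-concave if and only if, for every $w \in \RR^n$, the tropical argmax
\[
\Argmin_{u}\{- f(u) - \langle w, u\rangle\} \;=\; \{u \in \supp(f) : f(u)+\langle w,u\rangle = \max_{u'}(f(u')+\langle w,u'\rangle)\}
\]
is an M-convex set. (The ``only if'' direction is elementary, and the ``if'' direction follows by taking $w$ that separates a single pair $\{u,v\}$, which recovers [M-EXC].) Second, by construction applied to $f := \val_P$, this argmax is exactly $\supp(P_w)$. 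Third, since $P_w$ is a homogeneous HPP polynomial over $\RR$, Theorem~\ref{thm_HPP_Mset} (Choe--Oxley--Sokal--Wagner) tells us that $\supp(P_w)$ is M-convex. Letting $w$ range over $\RR^n$ yields M-concavity of $\val_P$.

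The main obstacle is the invocation of Brändén's Theorem 4: one must check that his tropical-truncation statement, typically phrased for stable polynomials over $\Ct$, specializes correctly to give a real HPP polynomial $P_w \in \RR[z]$ (of the same degree as $P$) when the input $P$ is a real HPP polynomial in $\Rt[z]$. This reduces to verifying that (i) the leading-coefficient map commutes with the substitution used in Brändén's argument, and (ii) the nonvanishing of $P$ on the product of right half-planes transfers to $P_w$ after the tropical limit $t \to 0^+$ (equivalently, after rescaling variables by $z_i \mapsto t^{-w_i} z_i$ and extracting the valuation-maximizing terms). Once this specialization is recorded, the combinatorial conclusion is immediate from the three steps above.
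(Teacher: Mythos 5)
The first thing to note is that the paper does not prove this statement at all: it is recorded as a direct corollary of Br\"and\'en \cite{Branden2010DCa}, so the ``proof'' in the paper is the citation itself (plus, implicitly, the kind of Tarski-principle transfer discussed in Appendix~\ref{subsection_first}). What you have written is therefore a reconstruction of the argument behind the cited result, and in outline it is the right one: pass to the initial form $P_w$, apply Theorem~\ref{thm_HPP_Mset} to get M-convexity of $\supp(P_w)$, and use the fact that M-concavity of a function with finite (hence bounded) effective domain is equivalent to M-convexity of all its tropical argmax sets. That architecture is sound and is essentially how Br\"and\'en proves his theorem.

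There are, however, two genuine gaps as written. First, your parenthetical justification of the ``if'' direction of the argmax characterization is wrong: one cannot in general choose $w$ making a prescribed pair $u,v$ simultaneously maximizers of $\val_P(\cdot)+\langle w,\cdot\rangle$; for instance, if a third support point $z$ with $2z=u+v$ satisfies $\val_P(z)>(\val_P(u)+\val_P(v))/2$, then $u$ and $v$ are never both optimal for any $w$. The equivalence you need is a nontrivial theorem of discrete convex analysis (see \cite{Murota2003DCA}, the characterization of M-convex functions via M-convexity of every $\arg\min f[-p]$, valid for bounded effective domains --- satisfied here since $\supp(P)$ is finite) and should be cited as such rather than re-derived by that shortcut. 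Second, and more seriously, the step you defer to ``Br\"and\'en's Theorem 4'' --- that the initial form $P_w$ is a nonzero real homogeneous HPP polynomial --- is not what that theorem provides as it is used in this paper: as cited here, it is essentially the tropicalization statement itself, so invoking it for the preservation lemma makes your argument circular unless you prove that lemma independently. Doing so is not routine over $\Rt$: its elements are formal series, so one cannot literally ``take the tropical limit $t\to 0^+$'' (note also that in this paper's convention $\val$ is the degree of the highest-order term, so the relevant regime is large $t$); one needs either convergent generalized Puiseux series or a transfer via Tarski's principle, combined with a Hurwitz-type argument that a nonzero limit of polynomials nonvanishing on the open product of half-planes is again nonvanishing there. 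Once that lemma is actually established, your three steps do yield the theorem.
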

We consider the rank-2 case. 
A function $f$ on $\{e_i + e_j \mid 1 \leq i,j \leq n \}$ is identified with a symmetric matrix $(f_{ij})$ by the correspondence
\[
f(e_i + e_j) \longleftrightarrow f_{ij} .
\]
By this correspondence, the condition [M-EXC] for $f$ is equivalent to [4PC] for $(f_{ij})$. 
This fact was observed by Dress and Terhalle~\cite{DressPro1998}, Hirai and Murota~\cite{Hirai2004MFa}. 
Thus Theorem \ref{thm_tropS_F} implies that $A:= (t^{f_{ij}})$ has at most one positive eigenvalue. 
Theorem~\ref{prop_half_equiv} is true in $\Rt$ 
by Tarski's principle (Appendix~\ref{subsection_first}). 
Therefore we have the following.
\begin{cor}\label{them_mainHPP}
For every M-concave function $f$ of rank $2$,
the polynomial 
$P_f (z) =$ $  \sum_{i,j \in [n]} (t^{f(e_i+e_j)}) z_i z_j$ 
has the half-plane property. 
\end{cor}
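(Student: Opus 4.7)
The plan is to reduce the corollary to Theorem~\ref{prop_half_equiv} by constructing an appropriate matrix, namely $A := (t^{f(e_i + e_j)}) \in \Rt^{n \times n}$, and verifying its hypothesis via Theorem~\ref{thm_tropS_F}. First I would identify the rank-$2$ M-concave function $f$ with the symmetric matrix $(f_{ij})$ over $\bar{\QQ}$ via $f_{ij} := f(e_i + e_j)$, noting that $f_{ij} = -\infty$ is allowed and corresponds to $A_{ij} = 0$, so that $A$ remains a bona fide nonnegative symmetric matrix in $\Rt$.

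The central structural ingredient is the equivalence, attributed in the paper to Dress--Terhalle and Hirai--Murota, between $[\text{M-EXC}]$ for a rank-$2$ M-concave function $f$ and $[\text{4PC}]$ for the associated symmetric matrix $(f_{ij})$. I would cite this equivalence rather than reprove it, since it is already referenced in the paragraph preceding the corollary. Given $[\text{4PC}]$ for $(f_{ij})$, Theorem~\ref{thm_tropS_F} applies directly and yields $A \in \bar{\mathcal{M}}_n$, i.e., $A$ is a nonnegative symmetric matrix in $\Rt$ with at most one positive eigenvalue (the diagonal nonnegativity is automatic since $A_{ii} = t^{f_{ii}} \geq 0$).

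Finally, I would transfer Theorem~\ref{prop_half_equiv} from $\RR$ to $\Rt$ by Tarski's principle, as flagged in Appendix~\ref{subsection_first}: the statement ``for every nonnegative symmetric $A$, the polynomial $z^{\top} A z$ has the half-plane property if and only if $A$ has at most one positive eigenvalue'' is first-order in the entries of $A$, so it holds verbatim over the real closed field $\Rt$. Applying the ``$(2) \Rightarrow (1)$'' direction to our matrix $A$, one obtains that $z^{\top} A z = \sum_{i,j \in [n]} t^{f(e_i + e_j)} z_i z_j = P_f(z)$ has the half-plane property, as required.

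All the substantive work has been done upstream: the novelty of Theorem~\ref{thm_tropS_F}, the classical equivalence $[\text{M-EXC}] \Leftrightarrow [\text{4PC}]$ in rank $2$, and Choe--Oxley--Sokal--Wagner's Theorem~\ref{prop_half_equiv} combine cleanly. Consequently I do not expect any genuine obstacle; the only point requiring a brief word of care is the bookkeeping for $-\infty$-valued entries of $f$ and the verification that the hypotheses of Theorem~\ref{prop_half_equiv} (nonnegativity and at most one positive eigenvalue) are exactly what Theorem~\ref{thm_tropS_F} delivers over $\Rt$.
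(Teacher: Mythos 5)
Your proposal is correct and follows essentially the same route as the paper: identify $f$ with the symmetric matrix $(f_{ij})$, invoke the known equivalence of [M-EXC] in rank $2$ with [4PC], apply Theorem~\ref{thm_tropS_F} to conclude $A=(t^{f_{ij}})$ has at most one positive eigenvalue, and transfer Theorem~\ref{prop_half_equiv} to $\Rt$ via Tarski's principle. Your extra remarks on $-\infty$ entries and nonnegativity of $t^{f_{ij}}$ in $\Rt$ are harmless bookkeeping that the paper leaves implicit.
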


\begin{rmk}
For a valuated matroid $f$ of rank $2$, the existence of an HPP polynomial $P$ with $\val_P = f$ 
can also be derived from a combination of the following two facts: 
(i) every valuated matroid of rank 2
is representable in $\Rt$~\cite{Speyer2004TtG}, and
(ii) for a reprensentable valuated matroid $f$ 
represented by a $k \times n$ matrix $M$,
the polynomial $Q(z) = \det MZM^{\top}$ is an HPP polynomial 
with $\val_Q = f$, where $Z = \diag (z_1,z_2, \dots, .z_n)$~\cite[Theorem 8.2]{Choe2004Hmp}; 
see Section \ref{subsec_val} for a valuated matroid and its representability.
\end{rmk}

\subsection{Valuated matroid and dissimilarity map on tree}\label{subsec_val}
Our formulas shed some insights on valuated ($\mDelta$-)matroids arising 
from weights of subtrees in a tree.
Denote by ${V\choose k}$ the set of all subsets of $V$ with cardinality $k$. 
For a matrix $M$, denote by $M_X$ the submatrix of $M$ consisting of the $i$-th columns over $i \in X$, and by $M_{X,Z}$ the submatrix consisting of the $i$-th rows and the $j$-th column over $i \in X$ and $j \in Z$.

A {\em valuated matroid} of rank $k$ is 
a map $\omega : {V \choose k} \to \bar{\QQ}$ satisfying
\begin{align*}
\omega (X) + \omega (Y) \leq \max_{j \in Y \setminus X}\{\omega ( X \setminus \{i\} \cup \{j\}) + \omega (Y \setminus \{j\} \cup \{i\})  \} \quad (X, Y \in {\textstyle {V \choose k} },\ i \in X \setminus Y) .
\end{align*}
This condition is the tropicalization of the Grassmann-Pl\"ucker relation of the Pl\"ucker coordinate $v_X := \val(\det M_X)$ for a $k \times n$ matrix $M$:
\[
v_X \cdot v_Y = \sum_{j \in Y \setminus X} \sigma_{ij}  \cdot v_{X \setminus \{i\} \cup \{j\}} \cdot v_{Y \setminus \{j\} \cup \{i\}} \quad (X, Y \in {\textstyle {V \choose k}}, \ i \in X \setminus Y) ,
\]
where $\sigma_{ij} \in \{ 1, -1 \}$ depends on the ordering of the elements $i,j$.
In particular for any $k \times n$ matrix $M$, the map $X \mapsto \val (\det M_X)$ is
a valuated matroid. Such a valuated matroid is called {\em representable}.
In tropical geometry, 
a representable valuated matroid is a point 
of the tropical grassmannian~\cite{Speyer2004TtG}.

In study on phylogenetic trees,
Pachter and Speyer~\cite{Pachter2004Rtf} found that 
weight of subtrees in a tree yields a class of valuated matroids. 
Let $T = (V, E)$ be a tree with a positive edge weight $l$.
For a vertex set $Y$, 
define the {\em dissimilarity} $D(Y)$ of $Y$
by the sum of edge weights $l(e)$ over edges $e$ in
the minimal subtree in $T$ containing $Y$. 
Let $X = \{1,2,\ldots,n\}$ be the set of leaves of $T$.
The {\em k-dissimilarity map} $D^k$ is a function on the $k$-leaf set ${X \choose k}$ defined by $D^k (Y) := D(Y)$.
\begin{thm}[Pachter--Speyer~\cite{Pachter2004Rtf}]
The $k$-dissimilarity map is a valuated matroid.
\end{thm}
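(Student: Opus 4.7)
The plan is to leverage the weighted form of the leading-term formula~(\ref{eq_main_det_lead}) together with the four-point condition [4PC], reducing the valuated matroid axiom to a direct tree computation.

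By the weighted generalization of~(\ref{eq_main_det_lead}) discussed at the end of Section~\ref{sec_intro}, we have $\val \det A[Y] = 2\, D(Y)$ for every $Y \in {X \choose k}$, where $A = (t^{d_{ij}})_{i,j \in V}$. This gives an algebraic interpretation of $D^k$ as half the valuation of a principal minor. By a classical result (see, e.g., \cite{Murota2003DCA}), the valuated matroid exchange axiom is equivalent to its three-term (tropical Pl\"ucker) form, so it suffices to verify that for every $S \in {X \choose k-2}$ and pairwise distinct $a, b, c, d \in X \setminus S$,
\[
D(S \cup \{a,b\}) + D(S \cup \{c,d\}) \leq \max\bigl\{D(S \cup \{a,c\}) + D(S \cup \{b,d\}),\ D(S \cup \{a,d\}) + D(S \cup \{b,c\})\bigr\}.
\]
This is precisely [4PC] for the symmetric matrix $w = (w_{ab})$ on $X \setminus S$ defined by $w_{ab} := D(S \cup \{a,b\})$.

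To verify [4PC] for $w$, I would exhibit a decomposition of the form~(\ref{eq_Hij1}). Contract the minimal subtree $T_S \subseteq T$ containing $S$ to a single vertex $*$, producing a weighted tree $T^S$. Since the minimal subtree of $T$ containing $S \cup \{a,b\}$ is $T_S$ together with the Steiner tree of $\{*, a, b\}$ in $T^S$, the Steiner-distance identity for three points in a tree gives
\[
D(S \cup \{a,b\}) - D(S) = \tfrac{1}{2}\bigl(d_{T^S}(a,b) + d_{T^S}(a,*) + d_{T^S}(b,*)\bigr).
\]
Setting $D'(a,b) := \tfrac{1}{2}\, d_{T^S}(a,b)$ (a tree metric on $X \setminus S$) and $p_a := \tfrac{1}{2}(D(S) + d_{T^S}(a,*))$ yields $w_{ab} = D'(a,b) + p_a + p_b$. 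Hence [4PC] for $w$ reduces to [4PC] for $D'$, which holds by Buneman's tree-metric theorem.

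The main obstacle is the reduction to the three-term exchange form, which is a non-trivial but classical result for valuated matroids; once it is granted, the remainder is a direct Steiner-distance computation in the contracted tree $T^S$, handling the two cases according to whether the paths from $a$ and $b$ to $T_S$ in $T$ share edges.
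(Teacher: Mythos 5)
The paper does not actually prove this statement --- it is quoted from Pachter--Speyer \cite{Pachter2004Rtf} --- so there is no internal proof to compare against; judged on its own, your argument is essentially correct. The reduction to the three-term exchange is legitimate, but you should note its precondition: the local-to-global exchange theorem for valuated matroids (M-concave functions) requires the support to be the basis family of a matroid, which holds here automatically because $D^k$ is finite on all of ${X \choose k}$, so its support is the uniform matroid. The tree computation is sound: contracting $T_S$ to $*$ and applying the three-point Steiner formula gives $D(S\cup\{a,b\}) = D(S) + \tfrac{1}{2}\bigl(d_{T^S}(a,b)+d_{T^S}(a,*)+d_{T^S}(b,*)\bigr)$, and the decomposition $w_{ab}=D'(a,b)+p_a+p_b$ transfers [4PC] from the halved tree metric $D'$ (Buneman) to $w$, exactly the mechanism of (\ref{eq_Hij1}) and the rank-$2$ equivalence of [M-EXC] with [4PC] recalled in Section~\ref{subsection_QPw}; the two cases you mention at the end are in fact absorbed by the Steiner formula, so no case analysis is needed. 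Two small points: your opening observation $\val \det A[Y] = 2D(Y)$ (correct, by the weighted form of (\ref{eq_main_det_lead}), whose leading coefficient is nonzero since every vertex of $V_Y\setminus Y$ has degree at least $2$ in $T_Y$) does no work in the proof and cannot be made to --- the paper notes in Section~\ref{subsec_val} that matroidal concavity of $X\mapsto\val(\det A[X])$ for symmetric $A$ is open --- so it should be presented only as motivation; and the degenerate cases $S=\emptyset$ and $|S|=1$ (where there is nothing, or only a single vertex, to contract) should be mentioned, though they are immediate since $w_{ab}$ is then itself (up to the trivial shift) a tree metric and [4PC] is Buneman's theorem directly. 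Compared with Pachter--Speyer's original direct verification of the exchange axiom, your route buys a short argument at the price of invoking the classical local exchange theorem, and it has the merit of running entirely on the [4PC]/(\ref{eq_Hij1}) machinery already used in this paper.
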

Pachter and Speyer~\cite{Pachter2004Rtf} asked whether a $k$-dissimilarity map 
is in the tropical grassmannian, or equivalently, is a representable valuated matroid 
(in our terminology).
Recently this problem was affirmatively solved:
\begin{thm}[Cools~\cite{Cools2009Otr}, Giraldo~\cite{Giraldo2010Dvo}, Manon~\cite{Manon2010Dmo}]\label{thm_kdis_trop}
The $k$-dissimilarity map is a representable valuated matroid.
\end{thm}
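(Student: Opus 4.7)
The plan is to produce an explicit $k \times n$ representing matrix $M$ over a Puiseux field whose $k \times k$ minors have valuations equal to $D^k(Y)$, using the leading-term formula (\ref{eq_main_det_lead}) as the main computational tool. After extending Theorem~\ref{thm_main} to the weighted tree setting as indicated in the paragraph following Theorem~\ref{thm_main_pf}, I note that the $n \times n$ symmetric matrix $A := (t^{D_{ij}})$ indexed by the leaves of $T$ satisfies
\[
\val \det A[Y] \;=\; 2\, D^k(Y) \qquad \left(Y \in \textstyle\binom{[n]}{k}\right),
\]
with explicit nonzero leading coefficient (the factor $\prod_{v \in V_Y \setminus Y}(\deg_{T_Y}(v)-1)$ is positive). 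This is exactly the ``square'' of the valuation we need, so the task reduces to taking an appropriate square root of $A$.

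Since $A$ is nonsingular over $\Rt$ by Corollary~\ref{cor_signature1} and $\Ct$ is algebraically closed, the symmetric bilinear form defined by $A$ is equivalent to the standard one, so there is an invertible $L \in \Ct^{n \times n}$ with $A = L^\top L$. Cauchy--Binet applied to $A[Y] = L_Y^\top L_Y$ (where $L_Y$ is the $n \times k$ submatrix of columns of $L$ indexed by $Y$) gives
\[
\det A[Y] \;=\; \sum_{Z \in \binom{[n]}{k}} (\det L_{Z,Y})^2 ,
\]
so the identity $\val \det A[Y] = 2 D^k(Y)$ forces at least one $Z$ to achieve $\val \det L_{Z,Y} = D^k(Y)$. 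If one can choose a single row subset $Z_0 \in \binom{[n]}{k}$ that witnesses this equality for every $Y$ simultaneously, then $M := L_{Z_0, *}$ is a $k \times n$ matrix whose Pl\"ucker coordinates tropicalize to $D^k$, proving representability.

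The main obstacle is precisely this uniform row-selection step: Cauchy--Binet alone only produces a witness $Z_Y$ for each $Y$ separately, and a priori several $Z$'s could contribute to the leading valuation. To overcome this I would make the factorization $A = L^\top L$ explicit by a Cholesky/LDL procedure adapted to a depth-first traversal of $T$ rooted at a fixed vertex, so that the triangular factor $L$ inherits the tree structure and its $k$-row minors admit a tree-combinatorial evaluation; the nonvanishing leading coefficient in (\ref{eq_main_det_lead}) then rules out cancellation of the dominant monomials in the Cauchy--Binet sum. Alternatively---and probably more cleanly---one can bypass the factorization by guessing $M$ directly from the tree (e.g.\ encoding nested subtrees along the traversal as rows of $M$) and verifying $\val \det M_Y = D^k(Y)$ by applying (\ref{eq_main_det_lead}) to $M^\top M$ and matching leading terms. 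Either way, the hardest step is the combinatorial bookkeeping that aligns the exponents produced by $M$ with the Steiner-tree weight $\sum_{e \in E_Y} l(e)$, and this is where the explicit nonzero factor $\prod_{v \in V_Y \setminus Y}(\deg_{T_Y}(v)-1)$ in the leading-term formula is essential, as it certifies that no accidental cancellation spoils the valuation computation.
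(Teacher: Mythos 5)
The theorem you are proving is not actually proved in this paper: it is quoted from Cools, Giraldo and Manon, and the closest the paper comes is the linear representation of the \emph{rooted} $k$-dissimilarity map $D_0^k$ in Section~\ref{subsec_val}. Comparing your plan with that construction pinpoints the genuine gap. The paper's argument succeeds because it first passes to the Schur complement $M=(t^{d_{ij}}-t^{d_{0i}+d_{0j}})$ of the root's diagonal entry, which is \emph{negative definite} over $\Rt$; hence $-M=Q^{\top}Q$ with a \emph{real} Puiseux matrix $Q$, every term $(\det Q_{Z,Y})^2$ in the Binet--Cauchy expansion has positive leading coefficient, and so $\val\bigl(\sum_Z(\det Q_{Z,Y})^2\bigr)=\max_Z \val\bigl((\det Q_{Z,Y})^2\bigr)$ with no cancellation. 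In your version the matrix $A=(t^{D_{ij}})$ has signature $(1,n-1)$ by Corollary~\ref{cor_signature1}, so it admits no Gram factorization over $\Rt$; the factorization $A=L^{\top}L$ you invoke lives over $\Ct$, where the squares $(\det L_{Z,Y})^2$ are complex and can cancel. Consequently Binet--Cauchy yields only $\max_Z \val \det L_{Z,Y}\geq D^k(Y)$, not the equality you assert (``forces at least one $Z$ to achieve $\val\det L_{Z,Y}=D^k(Y)$''): the nonzero leading coefficient in (\ref{eq_main_det_lead}) certifies that the \emph{sum} has valuation exactly $2D^k(Y)$, but it does not prevent individual terms from overshooting that valuation and cancelling.

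The same missing no-cancellation statement undermines the second half of the plan. Selecting one row set $Z_0$ uniformly for all $Y$ (or, in the paper's style, compressing with a generic real $k\times n$ matrix $J$) again needs the identity $\val\bigl(\sum_Z \det J_Z\det L_{Z,Y}\bigr)=\max_Z\val\det L_{Z,Y}$, which over $\Ct$ is exactly what positivity provided in the rooted case and what you do not have here. Your fallback suggestions are not substitutes for this step: a tree-adapted Cholesky of $A$ itself does not exist over $\Rt$ (indefiniteness), and ``applying (\ref{eq_main_det_lead}) to $M^{\top}M$'' is unjustified because that formula applies to matrices whose entries are monomials $t^{w_{ij}}$ with $(w_{ij})$ satisfying [4PC], which the Gram matrix of a guessed $M$ will not generally be. So as written your argument reproduces at most what the paper already establishes---representability of the \emph{rooted} dissimilarity map, where definiteness does the work---while the passage from $D(Y\cup\{0\})$ to the unrooted $D(Y)$ is precisely the hard step, settled in the literature by the quite different constructions of Cools, Giraldo and Manon.
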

Compared with this theorem, our formula (\ref{eq_main_det_f}) gives another type
of a representation of the dissimilarity map $D$ --- 
a representation by the degree of principal minors of a symmetric matrix.
Combinatorial properties
of the map $X \mapsto \val (\det A[X])$ for a symmetric matrix $A$
are not well-studied and not well-understood, though
it is known that
the nonzero support $\{ X \mid \det A[X] \neq 0 \}$ forms a $\mDelta$-matroid~\cite{Bouchet1988RoD, Dress1986Scp}.
A natural question is: does the map $X  \mapsto \val (\det A[X])$
have a kind of a matroidal concavity? 
We hope that our new representation of dissimilarity maps
will stimulate this line of research.

Giraldo~\cite{Giraldo2010Dvo} proved Theorem~\ref{thm_kdis_trop} by showing that 
the total length of a tree is represented as  
the degree of the determinant of a certain matrix associated with the tree.
His formula is somewhat similar to our formula, 
although we could not find a relationship between them.
\paragraph{Representation of rooted $k$-dissimilarity map.}
Nevertheless our formula  gives a linear representation 
for a special class of dissimilarity maps.
Fix a root vertex $0 \in V \setminus X$.
The {\em rooted k-dissimilarity map} $D_0^k$ is 
a function on ${X \choose k}$ defined by
$D_0^k(Y) := D(Y \cup \{0\})$. 
A linear representation of $D_0^k$ is constructed as follows.

Define an $n \times n$ matrix $M = (m_{ij})$
by $m_{ij} := t^{d_{ij}} - t^{d_{0i} + d_{0j}}$.
Namely $M$ is the Schur complement of the $0$-th diagonal element 
in $A[X \cup \{0\}] = (t^{d_{ij}})$.
Hence we have
\begin{itemize}
\item[(1)] $\det M[Y]  = \det A[Y \cup \{0\}]$ for $Y \subseteq X$, and
\item[(2)] $M$ is negative definite.
\end{itemize}
We see (2) from the sign pattern of $\det  M[\{1,2,\ldots,k\}] = \det A[\{0,1,2,\ldots,k\}]$. 
By (2) and the Cholesky factorization, there is an $n \times n$ matrix $Q$ 
with $- M = Q^{\top} Q$.
Take an arbitrary $k \times n$ matrix $J$ in $\RR$ 
whose entries have no algebraic dependence.
By the Binet--Cauchy formula we have
\begin{eqnarray*}
&& 2 \val (\det (JQ)_{Y})  =  2 \val( \sum_{Z} \det J_Z \det Q_{Z,Y})  
 =  2 \max_{Z}\val ( \det Q_{Z,Y}  ) \\
 &&  = \max_{Z} \val(  (\det Q_{Z,Y})^2 )=  \val( \sum_{Z}  (\det Q_{Z,Y})^2 ) =   \val( \det (Q_{Y})^{\top} Q_{Y}) \\ 
&& =  \val ( \det  M[Y])  =  \val( \det A[Y \cup \{0\}] )= D^{k}_0(Y),
\end{eqnarray*}
where $Z$ ranges all elements in ${X \choose k}$, and the second equality follows from the algebraic independence of elements in $J$.
Hence let $R := JQ$ and replace $t$ by $t^{1/2}$ in $R$. 
Then $D^{k}_0(Y) = \val( \det R_Y)$,
and we obtain a linear representation of $D^k_0$.

\paragraph{Valuated $\mDelta$-matroid and odd-dissimilarity map.}
A {\em valuated $\mDelta$-matroid}~\cite{Dress1991Agc, Wenzel1993Pfa} is 
a function $\omega : 2^V \to \bar{\QQ} $ satisfying
\[
\omega (X) + \omega (Y) \leq \max_{j \in (X \triangle Y) \setminus i}\{ \omega(X\triangle \{i,j\}) + \omega(Y \triangle \{i,j\}) \} \quad (X,Y \subseteq V,\ i \in X \triangle Y)  .
\]
This is the tropicalization of the Wick relation of principal minors $b_X := \Pf B[X] \ (X \subseteq V)$ of a skew-symmetric matrix $B$: 
\[
b_X \cdot b_Y = \sum_{j \in (X \triangle Y) \setminus i} \sigma '_{ij} \cdot b_{X\triangle \{i,j\} } \cdot b_{ Y \triangle \{i,j \} } \quad (X,Y \subseteq V,\ i \in X \triangle Y) ,
\]
where $\sigma '_{ij} \in \{ 1, -1 \}$ depends on the ordering of the elements $i,j$. 
Hence the map $X \mapsto \val(b_X)$ is a valuated $\mDelta$-matroid~\cite{Wenzel1993Pfa}; see also~\cite[Section 7.3]{Murota2000MaM}. Such a valuated $\mDelta$-matroid is called {\em representable}.

Let $T=(V,E)$ be a tree where $V = \{ 1,2,\dots,N \}$. 
For any tree $T$, the {\em odd-dissimilarity map} $D^o \in 2^V$ is defined as follows.
\[
D^o(X) := \left \{
\begin{array}{ll}
|O_X| &\text{if $|X|$ is even,}\\
-\infty &\text{if $|X|$ is odd,}
\end{array}
\right. \quad (X \subseteq V),
\] 
where $O_X$ is the set of odd edges with respect to $X$, defined in Section \ref{sec_intro}. 
After reordering, we suppose that $V$ is nicely-ordered. One can easily see that any subset $X \subseteq V$ is also nicely-ordered. By (\ref{eq_thm_main_pf}), we have 
\[
D^o (X) = \val ( \Pf B[X]) \quad (X \subseteq V).
\]
Moreover, let $B^{\vee}$ be the matrix obtained by replacing $t$ by $t^{-1}$ in $B$. Then we have
\[
- D^o (X) = \val ( \Pf B^{\vee}[X]) \quad (X \subseteq V).
\]
Therefore we obtain the following.
\begin{cor}\label{cor_odd_dissimi}
The odd-dissimilarity map and its negative are both representable valuated $\mDelta$-matroids.
\end{cor}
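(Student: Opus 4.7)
The plan is to exhibit $D^o$ and $-D^o$ as valuation maps of Pfaffians of principal submatrices of fixed skew-symmetric matrices, namely $B$ and $B^\vee$, so that the tropicalized Wick identity recalled just before the statement immediately certifies both as representable valuated $\mDelta$-matroids.

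First, I would fix a total order on $V$ with the property that every subset $X \subseteq V$ is nicely-ordered. A DFS discovery ordering works: rooting $T$ at an arbitrary vertex and relabeling $1,2,\dots,N$ in order of first discovery by depth-first search makes the closed tour $1 \to 2 \to \cdots \to N \to 1$ a full Eulerian walk that traverses each edge of $T$ exactly twice. For an arbitrary $X=\{i_1<i_2<\cdots<i_k\}$, the shortcut tour $i_1 \to i_2 \to \cdots \to i_k \to i_1$ is obtained from the full tour by iteratively deleting vertices outside $X$; each such deletion merges two consecutive tour-segments and can only decrease the number of crossings of any given edge. Hence no edge of $T$ is used more than twice by the $X$-tour, so $X$ is also nicely-ordered.

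With this ordering fixed, I would simply apply Theorem \ref{thm_main_pf}. When $|X|$ is even, it gives $\Pf B[X] = t^{|O_X|}$, hence $\val(\Pf B[X]) = |O_X| = D^o(X)$; when $|X|$ is odd, $\Pf B[X]=0$ because the Pfaffian of any odd-dimensional skew-symmetric matrix vanishes, so $\val(\Pf B[X]) = -\infty = D^o(X)$. Since the Pfaffian is a polynomial in the matrix entries, substituting $t \mapsto t^{-1}$ on both sides of Theorem \ref{thm_main_pf} yields $\Pf B^\vee[X] = t^{-|O_X|}$ in the even case (and $0$ otherwise), whence $\val(\Pf B^\vee[X]) = -D^o(X)$ for every $X$. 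Applying the tropicalized Wick relation recalled above then yields both representability assertions.

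The only nontrivial ingredient is the uniform nicely-ordering, i.e., producing an ordering of $V$ whose restriction to every $X \subseteq V$ is also nicely-ordered. This is where the DFS-traversal construction and the crossing-monotonicity under vertex deletion are essential; once this is in place, the corollary reduces to a direct reading of Theorem \ref{thm_main_pf} combined with the general fact that the Pfaffian principal minors of a skew-symmetric matrix over $\Rt$ satisfy the Wick identity whose tropicalization defines valuated $\mDelta$-matroids.
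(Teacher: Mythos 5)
Your proof is correct and follows essentially the same route as the paper: choose an ordering of $V$ that is nicely-ordered (so that every subset is automatically nicely-ordered), read off $D^o(X)=\val(\Pf B[X])$ and $-D^o(X)=\val(\Pf B^\vee[X])$ from Theorem~\ref{thm_main_pf}, and invoke the tropicalized Wick relation for Pfaffian principal minors. The only difference is that you spell out the DFS/Euler-tour construction and the deletion-monotonicity argument, which the paper leaves implicit in ``after reordering, we suppose that $V$ is nicely-ordered'' and its subset lemma.
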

This theorem implies that the odd-dissimilarity map is a nontrivial example of a valuated $\mDelta$-matroid whose negative is also a valuated $\mDelta$-matroid.

An algebraic variety determined by the Wick relation is called the {\em spinor variety}.
The spinor variety parametrizes maximal isotropic vector subspaces
in a vector space with an antisymmetric bilinear form, analogous to the grassmannian that parametrizes vector subspaces.
Rinc{\'o}n~\cite{Rincon2012Ils} considered the {\em tropical spinor variety} 
(a tropicalization of the spinor variety).  A representable valuated $\mDelta$-matroid is nothing but a point of  the tropical spinor variety in his sense.
Corollary~\ref{cor_odd_dissimi}
is therefore an isotropic analogue of Theorem~\ref{thm_kdis_trop}.

\section{Proof}\label{sec_proofs}
\subsection{Proof of Theorem \ref{thm_main}}
Let $T=(V,E)$ be a tree, and $X \subseteq V$. Let us recall the formula for the determinant of $A[X]$. Without loss of generality, we can assume that $X=\{1,2,\dots, n\}$.
\[
\det A[X] = \sum_{\sigma \in S_{n}} \sign(\sigma) \prod_{i=1}^{n}a_{i\sigma(i)},
\]
where $S_{n}$ is the symmetric group of degree $n$. Our first step is to identify each permutation of this formula with a path on the corresponding tree. 
Let us define following terminology.
\begin{itemize}
\item A {\em cycle} of $X$ is a cyclic sequence 
$(x_1, x_2, \ldots, x_k)$ $(k \geq 1)$ 
of a subset $\{x_1, x_2, \ldots, x_k\} \subseteq X$, 
where we do not distinguish $(x_1, x_2, \ldots, x_k)$  and  
$(x_k, x_1,x_2,\ldots, x_{k-1})$, and indices are considered modulo $k$.
\item A {\em cycle partition} $W$ of $X$  is a set of cycles of $X$ 
such that every element of $X$ belongs to exactly one cycle.
\item The {\it support} ${\rm supp}(C)$ of  a cycle $C=(x_1,x_2, \ldots, x_k)$ is a function on $E$ defined by: ${\rm supp}(C)(e)$ is 
the number of indices $i$ such that
$e$ belongs to the unique path between $x_i$ and $x_{i+1}$ in $T$.  
The support ${\rm supp}(W)$ of a cycle partition $W$ is 
defined as $\sum_{C \in W} {\rm supp}(C)$. Note that the support is even-valued. 
\item $\sign (W) := \prod_{C \in W} (-1)^{|C| + 1}$.
\item $\|  W \| := \sum_{e \in E} {\rm supp} (W)(e)$. 
\end{itemize}
For a cycle $C = (i_1,i_2,\ldots,i_k)$, this definition means that 
\begin{align}
\sum_{j =1}^k d_{i_j  i_{j+1}} = \sum_{e \in E} {\rm supp} (C) (e) . \label{eq_|C|}
\end{align}
By using these notions, the formula of the determinant can be rephrased as follows.
\begin{lem}\label{lem_32}
\begin{align}
\det  A[X] = \sum \left \{ \sign (W) t^{\|W\|} \mid \text{$W$: cycle partition of $X$}\right \}. \label{eq_lem_32}
\end{align}
\begin{proof}
Observe that there is a one-to-one correspondence between permutations 
and cycle partitions: 
a permutation is uniquely represented as the product of (disjoint) cyclic permutations, 
and each cyclic permutation $(i_1,i_2,\ldots, i_k)$ is 
naturally identified with a cycle in our sense.
In this correspondence, the sign of a permutation $\sigma$ 
is equal to $\sign (W)$ of the corresponding $W$, and by the equation~(\ref{eq_|C|}), we have
\begin{align*}
\prod_{i \in X}a_{i\sigma(i)} &=  t^{\sum_{i \in X} d_{i\sigma(i)}} = t^{\|  W \|}.
\end{align*}
Hence we have
\begin{align*}
\det A[X] & = \sum_{\sigma \in S_{n}} \sign(\sigma) \prod_{i \in X}a_{i\sigma(i)}
= \sum_{W} \sign (W)  t^{\|  W \|}.
\end{align*}
\end{proof}
\end{lem}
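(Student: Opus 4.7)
The plan is to read off the identity directly from the Leibniz expansion by reindexing the sum through the canonical bijection between permutations and cycle partitions. First, I would invoke the standard fact that every $\sigma \in S_n$ decomposes uniquely into disjoint cyclic permutations; under the definition of ``cycle'' given just above the lemma, each cyclic factor $(i_1, i_2, \ldots, i_k)$ corresponds to a cycle of $X$, and the multiset of these factors is a cycle partition $W$. This correspondence is a bijection between $S_n$ and cycle partitions of $X$.

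Next, I would verify term-by-term that the Leibniz summand transforms into the claimed form. For the sign: a $k$-cycle has sign $(-1)^{k-1} = (-1)^{k+1}$, and multiplicativity of $\sign$ under disjoint factorization gives $\sign(\sigma) = \prod_{C \in W}(-1)^{|C|+1} = \sign(W)$. For the monomial: since $a_{ij} = t^{d_{ij}}$, we have $\prod_{i \in X} a_{i \sigma(i)} = t^{\sum_i d_{i\sigma(i)}}$, and grouping $i$ by the cyclic factor it belongs to rewrites this exponent as $\sum_{C \in W} \sum_{j=1}^{|C|} d_{i_j i_{j+1}}$. Using the asserted identity $\sum_j d_{i_j i_{j+1}} = \sum_{e \in E} \supp(C)(e)$ and summing over $C \in W$ then produces exactly $\|W\|$.

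The only nontrivial ingredient is the per-cycle identity $\sum_j d_{i_j i_{j+1}} = \sum_e \supp(C)(e)$, so that is the main obstacle to spell out cleanly — though not a deep one. I would justify it by writing $d_{ij} = \sum_{e \in E} \mathbf{1}[e \in \text{path}_T(i,j)]$ and swapping the order of summation, which for each edge $e$ collects the number of consecutive index pairs $(i_j, i_{j+1})$ whose $T$-path traverses $e$; this count is precisely $\supp(C)(e)$ by definition. Once this is established, the formula follows by substituting both the sign and the exponent identifications into $\det A[X] = \sum_\sigma \sign(\sigma) \prod_i a_{i\sigma(i)}$ and reindexing by $W$.
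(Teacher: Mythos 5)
Your proposal is correct and follows essentially the same route as the paper: reindex the Leibniz expansion through the bijection between permutations and cycle partitions, matching signs via $\sign(\sigma)=\prod_{C\in W}(-1)^{|C|+1}$ and exponents via the per-cycle identity (\ref{eq_|C|}). Your explicit justification of that identity by writing $d_{ij}=\sum_{e\in E}\mathbf{1}[e\in\mathrm{path}_T(i,j)]$ and exchanging summations is a small elaboration of a step the paper treats as immediate from the definition of $\supp(C)$, but it is the same argument.
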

A cycle partition $W$ of $X$ 
is said to be {\em tight} if the support of $W$ 
is $\{0,2\}$-valued. 
In fact, non-tight cycle partitions vanish in (\ref{eq_lem_32}).
\begin{lem}\label{lem_main_lem}
\begin{align*}
\det A[X] 
= \sum\left \{ \sign (W) t^{\|W\|} \mid \text{$W$: tight cycle partition of $X$}\right \} . 
\end{align*}
\end{lem}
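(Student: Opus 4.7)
The plan is to prove the lemma by constructing a sign-reversing, $\|W\|$-preserving involution on the set of non-tight cycle partitions of $X$; the two classes then cancel pointwise in (\ref{eq_lem_32}), leaving only tight contributions. Since $\text{supp}(W)$ is always even-valued, a cycle partition $W$ fails to be tight precisely when some edge has support $\geq 4$. Fix once and for all a total order on $E$ and on $V$.

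Given a non-tight $W$, first select the canonically smallest edge $e = \{u,v\}$ with $\text{supp}(W)(e) \geq 4$. Removing $e$ splits $T$ into subtrees $T_u \ni u$ and $T_v \ni v$. Call a step $(x_i, x_{i+1})$ in a cycle of $W$ whose tree-path contains $e$ an \emph{up-crossing} if $x_i \in T_u$ and $x_{i+1} \in T_v$. Because each cycle closes up, every cycle has as many up- as down-crossings of $e$, so altogether there are $\text{supp}(W)(e)/2 \geq 2$ up-crossings. Each vertex of $X$ is the source of exactly one step in $W$, so the up-crossing sources are distinct; let $(x_i,x_{i+1})$ and $(y_j,y_{j+1})$ be the two up-crossings whose sources $x_i,y_j \in T_u$ are smallest. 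Define $\Phi(W)$ to be the cycle partition obtained by replacing these two steps with $(x_i,y_{j+1})$ and $(y_j,x_{i+1})$ — equivalently, by swapping their targets. A routine check shows that if both chosen crossings lie in the same cycle of $W$, then $\Phi$ splits it in two, while if they lie in different cycles, $\Phi$ merges them; in either case the number of cycles changes by $\pm 1$, so $\sign(\Phi(W)) = -\sign(W)$.

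Next verify that $\Phi$ preserves $\|W\|$. Decomposing the two replaced tree-paths through $e$ gives
\begin{align*}
P(x_i, x_{i+1}) &= P_{T_u}(x_i, u) \cup \{e\} \cup P_{T_v}(v, x_{i+1}), \\
P(y_j, y_{j+1}) &= P_{T_u}(y_j, u) \cup \{e\} \cup P_{T_v}(v, y_{j+1}),
\end{align*}
while the two new paths, whose endpoints are again split across $e$, decompose as
\begin{align*}
P(x_i, y_{j+1}) &= P_{T_u}(x_i, u) \cup \{e\} \cup P_{T_v}(v, y_{j+1}), \\
P(y_j, x_{i+1}) &= P_{T_u}(y_j, u) \cup \{e\} \cup P_{T_v}(v, x_{i+1}).
\end{align*}
The union (as a multiset of edges) is the same in both cases, so not only $\|W\|$ but in fact the entire function $\text{supp}(W)$ is preserved by $\Phi$.

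Finally, check that $\Phi$ is an involution. Since $\text{supp}(\Phi(W)) = \text{supp}(W)$, the canonical edge $e$ is the same for both. The swap leaves the sources $x_i, y_j$ as up-crossings of $e$ with the same sources (only the targets were exchanged, and both new targets still lie in $T_v$), and it does not alter any other up-crossing source, so the canonically chosen pair of crossings in $\Phi(W)$ is again indexed by $x_i$ and $y_j$; applying $\Phi$ once more re-swaps the two targets and returns $W$. The part that most needs care is precisely this invariance of the canonical choice under the swap — everything else is bookkeeping — but the global preservation of $\text{supp}$ handles the choice of $e$, and the fact that the set of up-crossing sources at $e$ is untouched handles the choice of the pair. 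Combining the three properties, non-tight cycle partitions pair off into $\Phi$-orbits of size two with opposite signs and equal $t^{\|W\|}$, proving the lemma.
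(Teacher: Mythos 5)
Your proof is correct, and it is a genuine (if close) variant of the paper's argument. The operation you define --- swapping the targets of two same-direction crossings of an edge $e$ with $\supp(W)(e) \geq 4$, which preserves $\supp(W)$ and, being composition with a transposition, reverses $\sign(W)$ and merges or splits cycles --- is exactly the paper's ``flip''. Where you differ is in how the cancellation is concluded. The paper makes no canonical choice: for each support function $l$ with some $l(e)\geq 4$ it forms the graph $\Gamma$ on ${\cal W}_l$ whose edges are single flips at $e$, notes that $\Gamma$ is bipartite between the sign classes ${\cal W}_l^{+},{\cal W}_l^{-}$ and regular (every $W$ admits exactly $2\binom{l(e)/2}{2}$ flips at $e$, and distinct flips give distinct partitions), and deduces $|{\cal W}_l^{+}| = |{\cal W}_l^{-}|$. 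You instead fix total orders, choose the smallest over-used edge and the two smallest up-crossing sources, and obtain an explicit sign-reversing, support-preserving involution $\Phi$; the one delicate point, which you correctly handle, is that these canonical choices are stable under the swap (since $\supp$ is preserved the edge $e$ is unchanged, and since only the targets of the two chosen steps move and both stay in $T_v$, the set of up-crossing sources at $e$ is unchanged), so $\Phi\circ\Phi$ is the identity and non-tight partitions cancel pointwise in (\ref{eq_lem_32}). Your route buys a self-contained bijective cancellation with no counting lemma, at the price of the tie-breaking bookkeeping; the paper's route avoids any canonical selection but needs the regularity count of flips. Either way the contribution of non-tight cycle partitions vanishes, which is the content of the lemma.
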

\begin{proof}Let us first introduce an operation on cycle partitions, called a {\em flip}.
Let $W$ be a cycle partition of $X$, and let $e = xy$ be an edge of $T$.
Suppose that ${\rm supp}(W)(xy) \geq 4$. 
Then (i) there are  two cycles $C,C'$ passing through $e$ 
in order $x \to y$, or (ii) there is a single cycle $C''$ 
passing through in order $x \to y$ twice.
For the case (i), suppose that $C = (v_1,v_2,\dots, v_k)$, $C' = (u_1,u_2,\dots,u_l)$, 
the unique path from $v_i $ to  $v_{i+1}$ passes through $xy$ in order $v_i \to x \to y \to v_{i+1}$, and
the unique path from $u_j $ to  $u_{j+1}$ passes through $xy$ in order $u_j \to x \to y \to u_{j+1}$.
Replace $C$ and $C'$ by  
\begin{equation}
C'' = (v_1,\ldots, v_i, u_{j+1},\dots,u_l,u_1 \ldots,u_j, v_{i+1},\ldots, v_k).
\end{equation}
Then we obtain a cycle partition $W' = W \setminus \{C,C'\} \cup \{C''\}$. 
Similarly, for the case (ii), there is a single cycle $C''$ as in (\theequation).
By reversing the operation above, we obtain two cycles $C,C'$.
Replacing $C''$ by $C$ and $C'$, 
we obtain a cycle partition $W' = W \setminus \{C''\} \cup \{C, C'\}$.
In this way, we obtain an operation $W \mapsto W'$ on cycle partitions, 
which we call {\em a flip}.

If a cycle partition $W'$ is obtained by applying a flip to another cycle partition $W$, then $\supp (W') = \supp (W)$ and 
$\sign(W') = - \sign(W)$. 
The former equation is obvious from the definition of a flip. The latter equation holds since $|W| - |W'| \in \{ 1, -1  \}$ and 
\begin{align*}
\sign(W) &= \prod_{C \in W} (-1)^{|C| + 1} = (-1)^{|X| + |W|}.
\end{align*}

For $l:E\to 2{\bf Z}_+$, let ${\cal W}_l$ be the set of all cycle partitions $W$ with ${\rm supp }(W) = l$. Let ${\cal W}_l^+$ denote 
the set of cycle partitions $W$ 
in ${\cal W}_l$ with $\sign (W) = 1$, 
and let ${\cal W}_l^- := {\cal W}_l \setminus {\cal W}_l^+$.
Then, from (\ref{eq_lem_32}), we have
\begin{align}
\det A[X] = \sum_{l:E \to 2{\bf Z}_+} (  |{\cal W}^+_l| - |{\cal W}^-_l | )  t^{\|l\|}, \label{eq_lem_+-}
\end{align}
where $\|l\| := \sum_{e \in E} l(e)$. 
It suffices to show that if there is an edge $e \in E$ with $l(e) \geq 4$, then $|{\cal W}_l^{+}| = |{\cal W}_l^{-}|$.

Let $\Gamma$ be the graph on ${\cal W}_l$ such that two vertices $W, W' \in {\cal W}_l$ are adjacent if and only if $W$ can be obtained from $W'$ by a single flip on $e$. 
The graph $\Gamma$ is a bipartite graph of bipartition $\{ {\cal W}_l^{+} , {\cal W}_l^{-} \}$, since a flip operation changes the sign, and 
$\Gamma$ cannot have an edge joining vertices of the same sign. Moreover  $\Gamma$ is a regular graph, 
since the number of different flips 
on $e$ is determined only by $l(e)$ 
(, which is equal to $2 \binom{l(e)/2}{2}$), 
and different flips yield different cycle partitions. 
Thus $\Gamma$ is a regular bipartite graph with bipartition $\{ {\cal W}_l^{+} , {\cal W}_l^{-} \}$, which implies $|{\cal W}_l^{+}| = |{\cal W}_l^{-}|$.
\end{proof}

For any set ${\cal W}$ of cycle partitions, define the number $\langle{\cal W}\rangle$ by 
\[
\langle{\cal W}\rangle:= \sum_{W \in {\cal W}} {\rm sign} (W).
\]
For a forest $F$ (not necessarily a subgraph of $T$) spanned by $X$, 
define ${\cal W}_{X,F}$ by the set 
of cycle partitions $W$ on $X$
such that each cycle $C \in W$ 
belongs to some connected component of $F$, 
and each edge in $F$ is traced by cycles in $W$ exactly twice. 
By using this notation, we have the following.
 \begin{lem}\label{lem_main_lem2}
\begin{align}
\det A[X] = \sum_{F} \langle {\cal W}_{X,F}\rangle  
 t^{2|E_F|}, \label{eq_lem_main_lem2}
\end{align}
where $F$ ranges all subgraphs in $T$ spanned by $X$.
\end{lem}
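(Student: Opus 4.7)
My plan is to re-group the tight cycle partitions appearing in Lemma~\ref{lem_main_lem} according to the subgraph of $T$ they occupy. To each tight cycle partition $W$ of $X$, I will associate the subgraph $F_W = (V_W, E_W)$ of $T$ by setting
\[
E_W := \{ e \in E \mid {\rm supp}(W)(e) = 2 \}, \qquad V_W := X \cup \bigcup_{e \in E_W} \{\text{endpoints of } e\}.
\]
Since $W$ is tight and ${\rm supp}(W)$ is $\{0,2\}$-valued, we immediately have $\|W\| = 2|E_W|$.

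The main technical point will be verifying that $F_W$ is spanned by $X$ in the sense of Section~\ref{sec_intro}. The inclusion $X \subseteq V_W$ is built in, so the substantive claim is that every leaf of $F_W$ lies in $X$. Suppose $v$ is a leaf of $F_W$ with unique incident $F_W$-edge $e$, and suppose for contradiction that $v \notin X$. Then $v$ is not a vertex of any cycle of $W$, so every time a cycle passes through $v$ it enters on one edge of $T$ at $v$ and exits on another, contributing $2$ to the total $W$-support around $v$. However, by the choice of $v$, the only edge at $v$ carrying positive $W$-support is $e$, so no such pass is possible, contradicting ${\rm supp}(W)(e) = 2$. Hence $v \in X$. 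I expect this leaf argument --- the step where tightness is used essentially --- to be the principal obstacle.

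With this in hand, I will identify the tight cycle partitions $W$ with $F_W = F$ with the elements of ${\cal W}_{X,F}$: the equality $E_W = E_F$ together with tightness encodes the ``each edge of $F$ traced exactly twice'' condition, and each cycle $C = (x_1, \ldots, x_k) \in W$ is confined to one connected component of $F$ because the $T$-paths between consecutive vertices $x_i, x_{i+1}$ use only edges in $E_W = E_F$. The converse is immediate, since $F$ is a subgraph of the tree $T$ and the unique $T$-path between two vertices in one component of $F$ lies entirely in $F$, forcing $E_W \subseteq E_F$ and then $E_W = E_F$ by the double-tracing condition. Substituting into Lemma~\ref{lem_main_lem} then yields
\[
\det A[X] = \sum_W \sign(W)\, t^{\|W\|} = \sum_F \Bigl( \sum_{W \in {\cal W}_{X,F}} \sign(W) \Bigr) t^{2|E_F|} = \sum_F \langle {\cal W}_{X,F} \rangle\, t^{2|E_F|},
\]
which is the desired formula.
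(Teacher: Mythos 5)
Your proof is correct and follows the same route as the paper: the paper's proof of this lemma simply invokes Lemma~\ref{lem_main_lem} together with the observation that the forest formed by the edges with ${\rm supp}(W)(e)=2$ of a tight cycle partition is spanned by $X$, which is exactly your regrouping via $F_W$. Your leaf argument and the identification of $\{W \,\text{tight}: F_W = F\}$ with ${\cal W}_{X,F}$ just make explicit the details the paper leaves to the reader.
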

\begin{proof}
This immediately follows from Lemma \ref{lem_main_lem} and the fact that
for any tight cycle partition $W$
the forest formed by edges $e$ with ${\rm supp} (W)(e) = 2$ is spanned by $X$.
\end{proof}

Hence, to prove Theorem~\ref{thm_main}, it suffices to show the following:
\begin{lem}\label{prop_DFX}
\[
\langle {\cal W}_{F,X}\rangle  = ( - 1)^{|X| + c(F) }\prod_{v \in V_{F} \setminus X} ({\rm deg}_F (v) - 1).
\]
\end{lem}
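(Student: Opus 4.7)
The plan is to prove Lemma~\ref{prop_DFX} via a sequence of reductions followed by a vertex-by-vertex combinatorial analysis. First, I would factor over the components of $F$: since every cycle of $W \in \mathcal{W}_{F,X}$ lies in a single connected component of $F$ (its legs being paths in $F$), we have $\langle \mathcal{W}_{F,X}\rangle = \prod_c \langle \mathcal{W}_{F_c, X_c}\rangle$ with $F_c$ the components and $X_c = X \cap V_{F_c}$. The claimed RHS factors identically: the product over $V_F \setminus X$ splits and $(-1)^{|X|+c(F)}$ splits as $\prod_c (-1)^{|X_c|+1}$. So it suffices to treat a single tree.

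Second, I would establish a flow-conservation lemma: for any $W \in \mathcal{W}_{F,X}$, each edge of $F$ is traversed exactly once in each direction by the legs of $W$. The signed net-traversal on each edge defines a divergence-free integer flow on the tree $F$, and since a tree supports no nonzero such flow, the count must be $+1$ in each direction. This lets me encode each $W$ by local data at every vertex $v$: a bijection between $v$'s $\deg_F(v)$ in-stubs and its $\deg_F(v)$ out-stubs, forbidding pairs that match the in- and out-stub of the same edge (an illegal ``U-turn''), except at $X$-vertices where a distinguished ``turning pair'' is permitted to U-turn. A direct consequence is subdivision invariance: when $v \in V_F \setminus X$ has $\deg_F(v) = 2$, the local pairing at $v$ is forced, yielding a sign-preserving bijection $\mathcal{W}_{F,X} \cong \mathcal{W}_{F',X}$ where $F'$ suppresses $v$; since $\deg_F(v) - 1 = 1$, the RHS is also unchanged. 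By induction, we may assume every non-$X$ vertex has degree at least $3$.

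Third, I would close the argument by induction on $|V_F \setminus X|$. The base case $V_F = X$ -- where the formula predicts $\langle \mathcal{W}_{F,V_F}\rangle = (-1)^{|E_F|}$ -- can be handled by induction on $|E_F|$ via a carefully set up leaf-peel bijection that distinguishes whether the leaf's cycle is a $2$-cycle with its neighbor, a $2$-cycle with a non-neighbor, or a longer cycle, in each case constructing a sign-flipping bijection with $\mathcal{W}_{F - \ell, V_F - \ell}$. For the inductive step, I would isolate a non-$X$ vertex $v$ of degree $d \geq 3$, regard the local transition at $v$ as a derangement $\sigma_v$ of $v$'s $d$ neighbors (the U-turn ban becoming $\sigma_v(w) \neq w$), and compute the contribution of $v$ via the signed derangement identity $\sum_{\sigma \in D_d} (-1)^{\mathrm{cyc}(\sigma)} = -(d-1)$. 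This identity follows directly from the exponential generating function $(1+x) e^{-x}$ for signed derangements and extracts exactly the factor $\deg_F(v) - 1$ at $v$, with the accompanying $-1$ absorbed into the global sign $(-1)^{|X|+c(F)}$.

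The main obstacle is that the total cycle count $m(W)$ is a global quantity and does not naively factor over vertices: the cycles of $W$ correspond to orbits of the composed permutation $\psi \circ \phi$ on half-edges, where $\phi$ assembles the local pairings at every vertex and $\psi$ matches the head and tail of each directed edge. Rigorously extracting the local contribution at $v$ requires a precise accounting of how the cycle structure of $\sigma_v$ refines the orbit count of $\psi \circ \phi$; this is most likely handled by an orbit-contraction bijection that treats $v$'s local neighborhood separately from the rest of the tree, or by a transfer-matrix argument that cleanly separates $v$'s contribution from the remaining inductive data.
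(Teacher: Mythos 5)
The core of your plan is not carried out, and it is exactly where the difficulty of the lemma sits. Your preparatory steps are fine: multiplicativity over components is the paper's Lemma~\ref{lem_DFX}(i); each edge of $F$ is indeed traversed once in each direction (each cycle is a closed walk, so its crossings of the cut at an edge balance); U-turns can occur only at the stop of an $X$-vertex, and since every closed walk in a tree must U-turn at a leaf of its edge-support, every glued orbit contains a stop, so the encoding of $W$ by local pairings is a genuine bijection. The problem is the sign. You want to extract the factor ${\rm deg}_F(v)-1$ at an internal vertex $v$ from $\sum_{\sigma\in D_d}(-1)^{\mathrm{cyc}(\sigma)}=-(d-1)$, but the exponent that actually occurs is not $\mathrm{cyc}(\sigma_v)$: with all other local pairings fixed, the cycles of $W$ through $v$ are the cycles of $\rho\circ\sigma_v$, where $\rho$ is the first-return permutation on the stubs at $v$ induced by the rest of the forest, and the cycles avoiding $v$ contribute a further summand $c_0$; both $\rho$ and $c_0$ depend on the remaining data. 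One can still pull out $\sum_{\sigma_v\in D_d}(-1)^{\mathrm{cyc}(\rho\sigma_v)}=-\sign(\rho)\,(d-1)$, but you are then left with a signed sum over the remaining data weighted by $\sign(\rho)(-1)^{c_0}$, and identifying that residual sum with $\langle{\cal W}_{F',X'}\rangle$ for a smaller instance is precisely the work that remains --- the step you defer to an unspecified ``orbit-contraction bijection'' or ``transfer-matrix argument.'' As written, the lemma is therefore not proved.

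Your base case is also flawed as stated: there is no sign-flipping bijection between ${\cal W}_{F,V_F}$ and ${\cal W}_{F-\ell,\,V_F\setminus\{\ell\}}$ in general, because the cardinalities differ. For the path on vertices $a,b,c$ (edges $ab$, $bc$) with $X=\{a,b,c\}$ there are three admissible cycle partitions, namely $(a,b,c)$, $(a,c,b)$ and $\{(a,c),(b)\}$, with signed sum $+1$, whereas the single edge $ab$ admits only $\{(a,b)\}$; so any correct leaf-peeling must be a sign-reversing involution cancelling most of ${\cal W}_{F,V_F}$ internally, not a bijection onto the smaller set. For comparison, the paper avoids the local-factorization issue entirely: Lemma~\ref{lem_DFX}(ii) shows that splitting an edge between two non-leaf vertices into two pendant edges (adding two new $X$-vertices) multiplies $\langle\cdot\rangle$ by $-1$, and induction on the number of non-leaf vertices then reduces everything to stars, where a single global recursion gives $\langle{\cal A}_k\rangle=(-1)^{k+1}(k-1)$ --- in effect your signed-derangement count, but performed once for the whole star rather than locally at every internal vertex. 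The missing ingredient in your plan is exactly a splitting or contraction lemma of that kind.
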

This lemma is an easy corollary of the following properties of $\langle {\cal W}_{X,F}\rangle$.
\begin{lem}\label{lem_DFX} \begin{itemize}
\item[{\rm (i)}]Suppose that $F$ is the vertex-disjoint union of two forests $H,H'$. Then we have 
\[
\langle{\cal W}_{F,X}\rangle = \langle{\cal W}_{H,X \cap V_{H}}\rangle \langle{\cal W}_{H',X \cap V_{H'}}\rangle.
\]
\item[{\rm (ii)}]For $e = xy \in E_{F}$,
let $F'$ be the forest obtained from $F$ by adding new vertices $x',y'$ and by
replacing $e$ by new edges $xy', x'y$. Then $F'$ is spanned by $X \cup \{x',y'\}$, and  we have
 \[
\langle{\cal W}_{F,X}\rangle  =  - \langle{\cal W}_{F', X\cup \{x',y'\}}\rangle.
\]
\item[{\rm (iii)}]If $F$ is a star with the center vertex $v$, then
\[ 
\langle{\cal W}_{F,X}\rangle   =  \left\{ 
\begin{array}{ll}
(-1)^{|X| + 1} & {\rm if} \ \mbox{$v \in X$}, \\
(-1)^{|X| + 1} ({\rm deg}_F (v) - 1) & {\rm otherwise}.
\end{array} \right.
\]
\end{itemize}
\end{lem}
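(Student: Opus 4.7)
For (i), any cycle $C$ in a partition $W \in {\cal W}_{F,X}$ must have all its vertices in a single connected component of $F$, since the unique paths joining consecutive vertices of $C$ are confined to that component. So $W$ splits uniquely as $W_H \cup W_{H'}$ with $W_H \in {\cal W}_{H,\, X \cap V_H}$ and $W_{H'} \in {\cal W}_{H',\, X \cap V_{H'}}$, and the twice-covered edge condition decouples across the two forests. Since $\sign(W) = \prod_{C \in W}(-1)^{|C|+1}$ is multiplicative over cycles, summing yields $\langle {\cal W}_{F,X}\rangle = \langle {\cal W}_{H,\,X \cap V_H}\rangle \cdot \langle {\cal W}_{H',\,X \cap V_{H'}}\rangle$.

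The main content is (ii), where the plan is to construct a sign-reversing bijection $\Phi\colon {\cal W}_{F,X} \to {\cal W}_{F',\,X\cup\{x',y'\}}$. The key preliminary fact is that each cycle induces a closed walk in the forest, so its support on every edge is even; combined with $\supp(W)(e) = 2$, this forces exactly one cycle $C_0 \in W$ to pass through $e$, and to do so exactly twice. Consequently, in cyclic order $C_0 = (a_1,\ldots,a_p, b_1,\ldots,b_q)$ with $a_i$ on the $x$-side and $b_j$ on the $y$-side of $e$, and I set $\Phi(W) := (W \setminus \{C_0\}) \cup \{(y', a_1, \ldots, a_p),\ (x', b_1, \ldots, b_q)\}$, leaving the other cycles (each entirely on one side of $e$) unchanged. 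To verify $\Phi(W) \in {\cal W}_{F',\,X \cup \{x',y'\}}$, I will check: (a) each new cycle lies in the appropriate component of $F'$; (b) for every edge $e' \neq e$ of $F$, the $x$-side path-pieces of the replaced pairs $(a_p, b_1), (b_q, a_1)$ coincide with those of the new pairs $(a_p, y'), (y', a_1)$ and symmetrically on the $y$-side, so support on each old edge is preserved; (c) $\supp(xy') = \supp(x'y) = 2$ by direct count. The inverse $\Psi$ deletes $y'$ and $x'$ from their (necessarily non-singleton) cycles and splices the two resulting arcs together in the unique way consistent with the cyclic orientations. Finally $|\Phi(W)| = |W|+1$ and $|X \cup \{x',y'\}| = |X|+2$, so $\sign(\Phi(W)) = (-1)^{(|X|+2)+(|W|+1)} = -\sign(W)$, and summing yields the claim. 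The main obstacle is precisely this bijection: the parity principle for closed walks is what makes it clean, since it rules out the a priori possibility ``two cycles each crossing $e$ once'' and guarantees that the splitting of $W$ along $e$ is unambiguous.

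For (iii), in a star $F$ with center $v$, the path between any two vertices $u, w$ uses the edge $v v_i$ precisely when exactly one of $u, w$ equals $v_i$; hence $\supp(W)(vv_i) = 2$ iff the leaf $v_i$ lies in a non-singleton cycle of $W$, while the status of the center $v$ (when $v \in X$) is unconstrained. Identifying cycle partitions of $X$ with permutations of $X$ (the signs agree since $(-1)^{|X|+|W|} = (-1)^{|X|-|W|}$), inclusion--exclusion over subsets $S \subseteq \{v_1,\ldots,v_k\}$ of ``forced fixed leaves'' gives
\begin{align*}
\langle {\cal W}_{F,X}\rangle = \sum_{S \subseteq \{v_1,\ldots,v_k\}} (-1)^{|S|} \sum_{\sigma \in S_X,\ \sigma|_S = \mathrm{id}} \sign(\sigma),
\end{align*}
and the inner signed sum vanishes unless $|X \setminus S| \leq 1$. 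When $v \in X$, only $S = \{v_1,\ldots,v_k\}$ survives, contributing $(-1)^k = (-1)^{|X|+1}$; when $v \notin X$, both $|S| = k$ and $|S| = k-1$ survive, summing to $(-1)^k + k(-1)^{k-1} = (-1)^{k+1}(k-1) = (-1)^{|X|+1}({\rm deg}_F(v)-1)$.
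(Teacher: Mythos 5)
Your argument is correct. For parts (i) and (ii) you follow essentially the paper's own route: in (ii) the paper constructs exactly your sign-reversing bijection, locating the unique cycle that crosses $e$ twice, splitting it into an $x$-side cycle through $y'$ and a $y$-side cycle through $x'$, and inverting by deleting $x',y'$ and splicing the two arcs; your explicit appeal to the evenness of each cycle's support on every edge (which rules out two distinct cycles each crossing $e$ once) just makes precise a uniqueness claim that the paper asserts without comment, and your sign count $(-1)^{(|X|+2)+(|W|+1)}=-(-1)^{|X|+|W|}$ matches the paper's. Part (iii) is where you genuinely depart. The paper argues by recursion: writing ${\cal A}_k$ (center not in $X$) and ${\cal B}_k$ (center in $X$), it proves $\langle{\cal A}_k\rangle=-(k-1)(\langle{\cal A}_{k-1}\rangle+\langle{\cal A}_{k-2}\rangle)$ and $\langle{\cal B}_k\rangle=\langle{\cal A}_k\rangle+\langle{\cal A}_{k-1}\rangle$ by inserting or deleting one element, and then solves the recurrences from the initial values. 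You instead observe that for a star the tightness condition says exactly that no leaf forms a singleton cycle, pass to permutations via the correspondence of Lemma~\ref{lem_32} (under which $\sign(W)$ is the permutation sign), and finish by inclusion--exclusion over forced fixed leaves together with the vanishing of the signed sum over all permutations of $m\ge 2$ letters; your case analysis is right in both cases, giving $(-1)^{|X|+1}$ when $v\in X$ and $(-1)^{|X|+1}({\rm deg}_F(v)-1)$ when $v\notin X$. Your route is shorter and closed-form in one step, while the paper's recursion stays entirely inside the cycle-partition language; either version of (iii) combines with (i) and (ii) to give Lemma~\ref{prop_DFX} exactly as in the paper.
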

\begin{proof}[Proof of Lemma \ref{prop_DFX}]
By Lemma \ref{lem_DFX} (i), it suffices to prove the formula for the case where $F$ is connected. 
We use the induction on the number $k$ of non-leaf vertices. 
If $k=0$ or $1$, then $F$ is a star, and the corresponding formula follows from (iii). 
Let $k>1$. Since $F$ is connected, there exists an edge $e$ joining two non-leaf vertices. Applying (ii) for $e$, we have $\langle{\cal W}_{F,X}\rangle =  - \langle{\cal W}_{F', X \cup \{x',y'\}}\rangle$, where $F'$ has two connected components $H$, $H'$, each of which has less non-leaf vertices than $F$ has. Let $Y:= (X \cup \{x',y'\}) \cap V_{H}$, and $Y' := (X \cup \{x',y'\}) \cap V_{H'}$. From (i) and inductive hypothesis, we get
 \begin{align*}
\langle{\cal W}_{F,X}\rangle &=  - \langle{\cal W}_{F', X \cup \{x',y'\}}\rangle =-\langle{\cal W}_{H, Y}\rangle \langle{\cal W}_{H', Y'}\rangle \\
&= - (-1)^{| X \cup \{x',y'\}| + |V_{H}| + |V_{H'}|+ |E_{H}| + |E_{H'}| } \prod_{v \in V_{F'} \setminus (X \cup \{x',y'\})} ({\rm deg}_F (v) -1)\\
&=(-1)^{|X|  + |V_F|- |E_F|} \prod_{v \in V_{F} \setminus X} ({\rm deg}_F (v) -1),
\end{align*}
where $|V_{H}| + |V_{H'}| = |V_F| + 2 $ and $|E_{H}| + |E_{H'}| = |E_F| +2 $. Since $c(F) = |V_F|- |E_F|$, we have the desired equation.
\end{proof}
\begin{proof}[Proof of Lemma \ref{lem_DFX}]
(i) Since every cycle partition $W \in {\cal W}_{F,X}$ is uniquely decomposed into cycle partitions $Z \in {\cal W}_{H,X \cap V_H}$ and $Z' \in {\cal W}_{H',X \cap V_{H'}}$ with $W = Z \cup Z'$, and vice versa, we have
\begin{align*}
\langle{\cal W}_{F,X}\rangle &= \sum_{W \in {\cal W}_{F,X}} {\rm sign} (W) = \sum_{Z\in {\cal W}_{H,X \cap V_{H}}} \  \sum_{Z'\in {\cal W}_{H',X \cap V_{H'}}}(\sign(Z))(\sign(Z'))\\
&= \langle{\cal W}_{H,X \cap V_H}\rangle \langle{\cal W}_{H',X \cap V_{H'}}\rangle.
\end{align*}

(ii) For every cycle partition $W \in {\cal W}_{F,X}$, 
there is the unique cycle 
\[
C = (u,v, \alpha_1, \dots ,\alpha_i, v',u', \beta_1,\dots,\beta_j) \in W
\]
such that the path between $u,v$ and the path between $v',u'$
include $xy$ in order $u \rightarrow x, y \rightarrow v$ and $v' \rightarrow y, x \rightarrow u'$, respectively. 
Define two cycles $C', C''$ by 
\begin{equation}
C' := (u,y',u', \beta_1,\dots,\beta_j), \quad C'' := ( v', x',v,\alpha_1,\dots,\alpha_i).
\end{equation}
Let $W' := W \setminus \{C\} \cup \{C',C' \}$.
Then $W' $ is a cycle partition in ${\cal W}_{F', X\cup \{x',y'\}}$ with $
\sign (W) = - \sign (W')$. 
Thus we obtain a map from ${\cal W}_{F,X}$ to ${\cal W}_{F', X\cup \{x',y'\}}$ such that $W \mapsto W'$.
Observe that this map is a bijection; any cycle partition $W' \in {\cal W}_{F', X\cup \{x',y'\}}$ 
includes cycles $C,C'$ with property (\theequation), 
and the reverse operation is definable on every cycle partition.
Hence we obtain
\begin{align*}
 \langle{\cal W}_{F , X \cap V_{F}}\rangle = \sum_{W \in {\cal W}_{F,X}} \sign(W) = -\sum_{W'\in {\cal W}_{F', X\cup \{x',y'\}}}\sign(W') =  - \langle{\cal W}_{F', X \cup \{x',y'\}}\rangle.
\end{align*}

(iii) Let $k := |X|$. In the both cases, $\langle{\cal W}_{F , X }\rangle$ depends only on the cardinality of $X$. We may denote ${\cal W}_{F , X }$ by ${\cal A}_k$ if $v \not\in X$, and denote ${\cal W}_{F , X }$ by ${\cal B}_k$ if $v \in X$. We will prove the following two claims.
\begin{itemize}
\item[($*1$)] $\langle{\cal A}_k \rangle = -(k-1)(\langle{\cal A}_{k-1} \rangle  + \langle{\cal A}_{k-2} \rangle ), \quad (k>3) .$
\item[($*2$)] $\langle{\cal B}_k \rangle  = \langle{\cal A}_{k} \rangle  + \langle{\cal A}_{k-1} \rangle , \quad (k>2) .$
\end{itemize}
By $\langle{\cal A}_2 \rangle = -1$, $\langle{\cal A}_3 \rangle = 2$, and the recursion ($*1$), we have
\begin{align*}
\langle{\cal A}_k \rangle 
= (-1)^{k+1}(k-1)
= (-1)^{|X| + 1} ({\rm deg}_F (v) - 1).
\end{align*}
Also we have
$\langle{\cal B}_1 \rangle= 1$, 
$\langle{\cal B}_2 \rangle = -1$, and from ($*2$),
$\langle{\cal B}_k \rangle = \langle{\cal A}_{k} \rangle  + \langle{\cal A}_{k-1} \rangle  = (-1)^{|X|+1}$.

For ($*1$), fix an arbitrary vertex $u \in X$. 
Let ${\cal A}'_k$ denote the set of 
cycle partitions $W$ in ${\cal A}_k$ such that
the unique cycle in $W$ containing $u$ has length at least three.
We will show that 
\begin{align}
\langle {\cal A}_k' \rangle &= - (k-1)\langle{\cal A}_{k-1} \rangle ,\label{equation_a_k_1}\\
(\langle {\cal A}_k \rangle - \langle {\cal A}_k' \rangle =) \langle {\cal A}_k \setminus {\cal A}_k' \rangle &= - (k-1)\langle{\cal A}_{k-2} \rangle .\label{equation_a_k_2}
\end{align}

To see (\ref{equation_a_k_1}), for a cycle partition $W \in {\cal A}_{k-1}$, 
take a consecutive pair $x,y$ in some cycle $C$ in $W$. 
Replace $x,y$ by $x,u,y$ in $C$. 
Then we get a cycle partition $W'$ 
in ${\cal A}_k'$  with sign change.
There are $k-1$ ways of choosing a consecutive pair in each cycle partition. 
Also every cycle partition in ${\cal A}_k'$ is obtained in this way. Hence we have (\ref{equation_a_k_1}).

To see (\ref{equation_a_k_2}), observe that ${\cal A}_k \setminus {\cal A}_k'$ is the disjoint union, over $x \in X \setminus u$, of  
the sets ${\cal W}_{x}$ of cycle partitions including $(u,x)$. 
Delete $(u,x)$ from each cycle partition of ${\cal W}_{x}$. Then we get a cycle partition  
in ${\cal A}_{k-2}$ with sign change. Also, every cycle partition in ${\cal W}_{x}$ is obtained by adding the cycle $(u,x)$ to cycle partitions in ${\cal A}_{k-2}$. Hence 
$\langle {\cal A}_k \setminus {\cal A}_k' \rangle = \sum_{x \in X \setminus u} \langle{\cal W}_{x}\rangle = -(k-1)\langle {\cal A}_{k-2} \rangle$, 
and we have (\ref{equation_a_k_2}).

Consider ($*2$). Let ${\cal B}_k '$ denote the set of cycle partitions $W$ in ${\cal B}_k$ such that $W$ includes the singleton cycle $(v)$. 
It suffices to show that $\langle {\cal B}_k ' \rangle  = \langle {\cal A}_{k-1} \rangle$ and $\langle {\cal B}_k  \setminus {\cal B}_k ' \rangle = \langle {\cal A}_k \rangle$.

The first equation follows from the observation that the deletion of $(u)$ from cycle partitions in ${\cal B}_k ' $ makes a one-to-one correspondence between ${\cal B}_k ' $  and ${\cal A}_{k-1}$. For the latter equation, add a new leaf $v'$ to $F$, and replace $v$ by $v'$ in each cycle partition in ${\cal B}_k  \setminus {\cal B}_k '$. This procedure maps cycle partitions in ${\cal B}_k  \setminus {\cal B}_k '$ to ones in ${\cal A}_k$ bijectively, and thus we have the latter equation.
\end{proof}

\subsection{Proof of Theorem \ref{thm_main_pf}}
Suppose that $X=\{1,2,\dots, 2n\}$ and $X$ is nicely-ordered with respect to $T$. We denote $\Pf B[X]$ by $\Pf[X]$ for simplicity. Let us recall the recursive definition of Pfaffian:
\begin{align}
\Pf [X] = \sum_{j \in X} (-1)^{i+j+1} b_{ij} \Pf [X \setminus \{ i ,j \}]  \quad (i \in X).\label{eq_lem_pf_x}
\end{align}
Since the deletion of an element in $X$ only omits paths of the corresponding tour, we have the following lemma.
\begin{lem}
If $X$ is nicely-ordered, then every subset $Y$ of $X$ is nicely-ordered.
\end{lem}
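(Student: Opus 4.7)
The plan is to reduce to the case where $Y$ is obtained from $X$ by removing a single element and then iterate. So assume $X = \{i_1, i_2, \ldots, i_k\}$ with $i_1 < \cdots < i_k$, let $Y = X \setminus \{i_j\}$, and observe that the tour associated to $Y$ is obtained from the tour associated to $X$ by replacing the two consecutive legs $i_{j-1} \to i_j$ and $i_j \to i_{j+1}$ by the single leg $i_{j-1} \to i_{j+1}$ (with indices mod $k$). The goal is to show that for every edge $e \in E(T)$, the number of traversals of $e$ does not increase under this replacement.

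Fix $e \in E(T)$, and let $T_1, T_2$ be the two components of $T \setminus e$. For any two vertices $u,v \in V$, the unique $uv$-path in $T$ passes through $e$ if and only if $u$ and $v$ lie in different components. Hence the number of times that a closed tour passes through $e$ equals the number of consecutive pairs in the tour whose endpoints lie on opposite sides of $e$. Define $s(v) \in \{1,2\}$ by $v \in T_{s(v)}$. The change in the traversal count of $e$ produced by the replacement above equals
\[
[s(i_{j-1}) \neq s(i_{j+1})] - [s(i_{j-1}) \neq s(i_j)] - [s(i_j) \neq s(i_{j+1})].
\]

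A straightforward case analysis on the three values $s(i_{j-1}), s(i_j), s(i_{j+1}) \in \{1,2\}$ shows this expression is either $0$ (when $s(i_{j-1})=s(i_j)$ or $s(i_j)=s(i_{j+1})$) or $-2$ (when $s(i_j)$ differs from both of $s(i_{j-1})=s(i_{j+1})$). In particular, the count for $e$ can only stay the same or drop by $2$. Since the tour for $X$ traverses each edge at most twice by hypothesis, so does the tour for $Y$; that is, $Y$ is nicely-ordered.

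Iterating this one-element deletion argument $|X| - |Y|$ times yields the lemma. There is no serious obstacle here: the whole content is the parity/case-analysis observation above, and the induction on $|X \setminus Y|$ is automatic once the single-deletion step is established.
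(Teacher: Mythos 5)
Your proof is correct and follows the same idea as the paper, which disposes of this lemma in one sentence by observing that deleting an element of $X$ can only remove edge-traversals from the tour; your edge-by-edge case analysis (change of $0$ or $-2$ per edge) is just a careful verification of that observation.
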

In the following, we tacitly use this lemma.
For distinct $i,j \in X$, define $P_{ij} \subseteq E$ as the set of edges which belong to the unique path from $i$ to $j$.
\begin{lem} \label{lem_sym_diff}
$ O_{X\setminus \{i,j\}} = O_{X} \triangle P_{ij}$.
\begin{proof}
Let $e \in E$, and let $T'$, $T''$ be the two components obtained by the removal of $e$. 
If $e \not\in P_{ij}$, then either $T'$ or $T''$ must include both $i$ and $j$, and hence $e \in O_{X\setminus \{i,j\}} \Leftrightarrow e \in O_{X}$. 
If $e \in P_{ij}$, then $T'$ must include just one of $i$ and $j$, and hence $e \in O_{X\setminus \{i,j\}} \Leftrightarrow e \not \in O_{X}$. These imply the statement.
\end{proof}
\end{lem}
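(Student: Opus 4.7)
The plan is to verify the set equality $O_{X \setminus \{i,j\}} = O_X \triangle P_{ij}$ edge by edge. Fix an edge $e \in E$ and let $T', T''$ be the two components of $T - e$. By definition, $e \in O_Y$ for a vertex set $Y$ precisely when both $|T' \cap Y|$ and $|T'' \cap Y|$ are odd, so everything reduces to tracking how these two parities change when $X$ is replaced by $X \setminus \{i,j\}$.

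First I would treat the case $e \notin P_{ij}$. Then the unique $i$-to-$j$ path avoids $e$, so $i$ and $j$ lie in the same component, say $T'$. Passing to $X \setminus \{i,j\}$ decreases $|T' \cap X|$ by exactly $2$ and leaves $|T'' \cap X|$ unchanged, so both parities are preserved; hence $e \in O_{X \setminus \{i,j\}}$ if and only if $e \in O_X$, which agrees with $O_X \triangle P_{ij}$ since $e \notin P_{ij}$ makes membership in $O_X \triangle P_{ij}$ equivalent to membership in $O_X$. In the remaining case $e \in P_{ij}$, the edge $e$ separates $i$ from $j$; without loss of generality $i \in T'$ and $j \in T''$. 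Removing $\{i,j\}$ decreases each of $|T' \cap X|$ and $|T'' \cap X|$ by one, flipping both parities simultaneously. Since the lemma is applied inside the Pfaffian recurrence where $|X|$ is even, $|T' \cap X|$ and $|T'' \cap X|$ share the same parity; consequently ``both odd'' toggles with ``both even'', yielding $e \in O_{X \setminus \{i,j\}} \iff e \notin O_X$, which again matches $O_X \triangle P_{ij}$ because $e \in P_{ij}$.

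There is no serious obstacle here; the only delicate point is the tacit appeal to $|X|$ being even in the second case, which guarantees that a simultaneous parity flip on both sides of $e$ really corresponds to toggling membership in $O_X$. Indeed, were $|X|$ odd, then $|T'\cap X|$ and $|T''\cap X|$ would have opposite parities, neither $O_X$ nor $O_{X\setminus\{i,j\}}$ would contain such an $e$, and the identity would fail on every $e \in P_{ij}$. Under the Pfaffian setting in force, combining the two cases yields the claim.
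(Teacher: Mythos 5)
Your proposal is correct and follows essentially the same route as the paper: an edge-by-edge case analysis on whether $e$ lies in $P_{ij}$, tracking the parities of $|T'\cap X|$ and $|T''\cap X|$ in the two components of $T-e$. Your explicit observation that the second case needs $|X|$ even (so that the two counts share a parity and a simultaneous flip toggles membership in $O_X$) is a point the paper's ``hence'' leaves tacit, and it is indeed guaranteed in the Pfaffian setting where the lemma is applied.
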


The following lemma gives a pairing of elements of $X$ via odd edges.
\begin{lem}\label{prop_pf_cancel}
There is a partition $\{ \{i_1 , j_1\}, \{i_2, j_2\}, \dots, \{i_n, j_n\} \}$ of $X$ such that 
\begin{itemize}
\item[{\rm (i)}] $i_k + j_k $ is odd for all $k=1,\dots,n$, and
\item[{\rm (ii)}] $O_{X}$ is the disjoint union of $P_{i_1 j_1} , \dots, P_{i_n j_n}$. In particular, it follows that 
\[
P_{u i_k} \setminus O_{ X } = P_{u j_k} \setminus O_{X } \quad (u \in X ,\ k = 1,2,\dots,n ).
\]
\end{itemize}
\begin{proof}
We first show that there exists $i$ with $P_{i(i+1)} \subseteq O_{X}$, 
where the indices are considered modulo $2n$.
We may assume that all leaves of $T$ belong to $X$. 
Consider the subgraph $H$ of $T$ formed by $O_X$. 
There exists a connected component $T'$ of $H$ 
incident to (at most) one edge $e \in E \setminus O_X$ in $T$. 
Necessarily $T'$ contains at least two vertices $i,j$ in $X$. 
Then $i-1$ or $i+1$ also belongs to $T'$; 
otherwise the edge $e$ is traced at least four times by the tour 
$1 \rightarrow 2 \rightarrow \cdots \rightarrow 2n \rightarrow 1$; 
contradiction to the fact that $X$ is nicely-ordered. 
This implies $P_{(i-1)i}  \subseteq O_{X}$ or $P_{i(i+1)}  \subseteq O_{X}$.

We prove the statement of this lemma by induction on the cardinality
of $X$.
Pick a vertex $i$ with $P_{i(i+1)} \subseteq O_{X}$.
Let $\{i_1 , j_1\} := \{i,i+1\}$.
Then $O_X$ is the disjoint union of $P_{i(i+1)}$ and 
$O_{X \setminus \{i,i+1\}}$, and $i_1+j_1$ is odd. 
We can renumber $X \setminus \{ i,i+1 \}$ with keeping the parity of the indices.
By induction, $X \setminus \{ i,i+1 \}$ has a partition
$\{ \{i_2,j_2\},\dots,\{ i_n, j_n\}  \}$
such that $i_k + j_k$ is odd for all $k=2,\dots,n$ and
$O_{X \setminus \{i,i+1\}}$ is the disjoint union of $P_{i_2 j_2} , \dots, P_{i_n j_n}$.
Then $\{ \{i_1 , j_1\}, \{i_2, j_2\}, \dots, \{i_n, j_n\} \}$ is a desired partition of $X$, and the proof is complete.
\end{proof}
\end{lem}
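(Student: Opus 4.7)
The plan is to prove the lemma by induction on $n = |X|/2$. The base case $n=1$ (so $X=\{1,2\}$) is immediate: an edge $e$ with components $A,B$ lies in $O_X$ iff each side has odd $X$-count, iff $e$ separates $1$ and $2$, iff $e \in P_{12}$. So $\{\{1,2\}\}$ works. For the inductive step, the crux is to locate a cyclically consecutive pair $x_k, x_{k+1}$ in $X$ (indices read modulo $2n$) such that $P_{x_k x_{k+1}} \subseteq O_X$. Given such a pair, I set $\{i_1,j_1\} := \{x_k, x_{k+1}\}$; property (i) holds because the cyclic sequence of labels in $X=\{1,\dots,2n\}$ alternates in parity, so consecutive labels always sum to odd. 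By Lemma~\ref{lem_sym_diff}, $O_{X\setminus\{x_k,x_{k+1}\}} = O_X \triangle P_{x_k x_{k+1}} = O_X \setminus P_{x_k x_{k+1}}$, using $P_{x_k x_{k+1}} \subseteq O_X$. The reduced set remains nicely-ordered (by the paper's preceding lemma) and its cyclic sequence still alternates in parity (peeling off two consecutive labels preserves alternation), so I apply the inductive hypothesis and glue $\{i_1,j_1\}$ with the partition it yields. Disjointness in (ii) is automatic since the inductive paths lie in $O_X\setminus P_{x_k x_{k+1}}$.

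The heart of the argument is producing the consecutive pair. I would first note that one may assume every leaf of $T$ lies in $X$: pruning a non-$X$ leaf changes neither $O_X$ nor $P_{ij}$ for $i,j \in X$. Then consider $H := (V, O_X)$ and pick a connected component $T'$ of $H$ that is incident (in $T$) to at most one edge of $E\setminus O_X$; such $T'$ exists because contracting each component of $H$ in $T$ yields a tree, and a tree has a leaf. I claim $T'$ contains at least two vertices of $X$. First, $T'$ (as a subtree of $T$) has at most one ``non-$T$-leaf'' vertex, namely the endpoint of its outgoing non-odd edge, so all other leaves of the subtree $T'$ are leaves of $T$, hence in $X$; this gives $|X\cap T'|\geq 1$. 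Second, if $T'$ has an outgoing edge $e \notin O_X$ then removal of $e$ from $T$ separates $T'$ from $T\setminus T'$ with even $X$-count on each side (since $e$ is not odd); if $T'$ has no outgoing edge then $T'=T$ and $|X\cap T'|=2n$. Either way, $|X\cap T'|$ is even, so $|X\cap T'|\geq 2$.

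Next, I show that two of these $X$-vertices are cyclically consecutive. If $T'=T$ or $X\subseteq T'$, then any consecutive pair works since the connecting path stays in $T'\subseteq H$. Otherwise both $T'$ and $T\setminus T'$ contain $X$-vertices, and the tour $x_1\to x_2\to\cdots\to x_{2n}\to x_1$ must cross the single outgoing edge $e$; since $X$ is nicely-ordered, $e$ is used at most twice, hence exactly twice, so the $X$-vertices inside $T'$ appear as a contiguous arc $x_{k+1},x_{k+2},\dots,x_l$ of length $\geq 2$ in the cyclic order. Then $x_{k+1}$ and $x_{k+2}$ are cyclically consecutive in $X$, and the path $P_{x_{k+1}x_{k+2}}$ stays inside the connected subtree $T'$, whose edges all lie in $O_X$. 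This yields the required pair.

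Finally, the ``in particular'' clause $P_{u i_k}\setminus O_X = P_{u j_k}\setminus O_X$ follows from the standard tree-path identity $P_{u i_k}\triangle P_{u j_k} = P_{i_k j_k}$ (via the median of $u,i_k,j_k$) combined with $P_{i_k j_k}\subseteq O_X$ from (ii): if $A\triangle B \subseteq C$ then $A\setminus C = (A\cap B)\setminus C = B\setminus C$. The main obstacle I anticipate is the cut-crossing argument showing that $T'$ harbors a cyclically consecutive pair; it hinges delicately on combining the nicely-ordered hypothesis (which caps the number of cut crossings) with the parity count forced by the fact that $T'$'s outgoing edge is non-odd.
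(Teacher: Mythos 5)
Your proposal is correct and follows essentially the same route as the paper's proof: prune non-$X$ leaves, take a component of the odd-edge subgraph with at most one outgoing non-odd edge, use the nicely-ordered bound on tour crossings to find a cyclically consecutive pair whose connecting path lies in $O_X$, and then induct using Lemma~\ref{lem_sym_diff}. The only differences are presentational — you maintain a parity-alternation invariant instead of renumbering, and you spell out details the paper leaves implicit (the count $|X\cap T'|\geq 2$ and the ``in particular'' clause) — all of which check out.
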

We are ready to prove Theorem \ref{thm_main_pf}.
\begin{proof}[Proof of Theorem \ref{thm_main_pf}]
We prove the statement by the induction on the cardinality of $X$. If $X = \{1, 2\}$, 
then $\Pf[\{1, 2\}] = b_{12} = t^{d_{12}} =t^{|O_{\{1,2\}}|}$. 
Suppose $|X| > 2$. Fix a partition $\{\{i_1, j_1\}, \dots, \{i_n, j_n\}\}$ of $X$ satisfying the condition in Lemma \ref{prop_pf_cancel}. We can assume that $i_1 = 1$.
Since $j_1$ is even and $i_k + j_k $ is odd for all $k$, from the formula (\ref{eq_lem_pf_x}) we have
\begin{align}
\Pf[X] =& b_{1 j_1}\Pf [X  \setminus \{ 1, j_1\} ] \nonumber \\
&+ \sum_{k=2}^n 
 (-1)^{i_k} \Bigl( b_{1 i_k} \Pf [X  \setminus \{ 1, i_k\} ]  -  b_{1 j_k} \Pf[X  \setminus \{ 1, j_k\} ] \Bigr) . \label{eq_th_pf_0}
\end{align}
Since $O_{X }$ is the disjoint union of $P_{1 j_1}$ and $O_{ X  \setminus \{ 1, j_1\}}$, by inductive hypothesis we have
\begin{align*}
b_{1 j_1} \Pf [X  \setminus \{ 1, j_1\} ] &= t^{|P_{1j_1}| + |O_{ X  \setminus \{ 1, j_1\}}|} = t^{|O_{ X }|} ,\\
b_{1 i_k} \Pf [X  \setminus \{ 1, i_k\} ] &=t^{|P_{1i_k}| + |O_{ X  \setminus \{1, i_k\}}|},\\
b_{1 j_k} \Pf[X  \setminus \{ 1, j_k\} ] &=  t^{|P_{1j_k}| + |O_{X  \setminus \{ 1,j_k\}}|} .
\end{align*}
From Lemma \ref{lem_sym_diff}, we have
$
|P_{1i_k}| + |O_{ X  \setminus \{1, i_k\}}|  = |P_{1 i_k}| + |O_{ X } \triangle P_{ 1i_k}|
= |O_{ X } | + 2|P_{1i_k} \setminus O_{ X }|
= |O_{ X } | + 2|P_{ 1j_k} \setminus O_{ X }|
= |P_{1j_k}| + |O_{X  \setminus \{ 1,j_k\}}|
$. 
Hence the sum of the equation (\ref{eq_th_pf_0}) vanishes, and we have (\ref{eq_thm_main_pf}).
\end{proof}

\subsection*{Acknowledgements}
The authors thank Kazuo Murota for drawing the author's attention to this subject,
and also thank the referees for helpful comments.
This research was partially supported by KAKENHI (21360045, 23740068, 26330023).

\bibliographystyle{fplain}
\bibliography{all}

\appendix
\section{Tarski's principle for real closed fields}\label{subsection_first}
A field $K$ is a {\em real closed field} if $K$ is an ordered field such that every positive element is a sum of squares in $K$, and every polynomial on $K$ of odd degree has at least one root in $K$ (see~\cite[p. 34]{Basu2006Air}). 
It is known that $\Rt$ is a real closed field (see~\cite[Theorem 2.91]{Basu2006Air}). 
An important fact in a real closed field is the following:
\begin{thm}[Tarski's principle (see~{\cite[Theorems 2.80, 2.81]{Basu2006Air}})]
A first-order statement is 
true in a real closed field if and only if it is true in every real closed field.
\end{thm}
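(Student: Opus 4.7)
The plan is to derive Tarski's principle by proving that the first-order theory of real closed fields is complete; since all real closed fields are models of the same consistent theory, completeness is exactly the statement that any first-order sentence true in one is true in all. The standard route to completeness goes through quantifier elimination, which I would pursue here.

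First, I would fix the first-order language of ordered fields $\{0, 1, +, -, \cdot, <\}$ and write down the axioms of a real closed field (ordered field axioms, plus the first-order schemes asserting that every positive element is a square and that every polynomial of odd degree has a root). These give a recursive, consistent theory $\mathrm{RCF}$ with $\RR$ as one model. The target is to show that for every first-order formula $\phi(x_1, \ldots, x_n)$ there is a quantifier-free formula $\psi(x_1, \ldots, x_n)$ in the same free variables such that $\mathrm{RCF} \vdash \phi \leftrightarrow \psi$.

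Second, by the usual induction on formula complexity (handling $\lnot, \land, \lor$ trivially and pushing negations inward so that only $\exists$ remains), the task reduces to eliminating a single existential quantifier from a formula of the form $\exists y\, \bigwedge_i P_i(y, \bar x)\, \sigma_i\, 0$, where $\sigma_i \in \{=, >, <, \neq\}$ and each $P_i$ is a polynomial in $y$ with coefficients that are polynomials in the parameters $\bar x$. The heart of the proof is to show that the existence of such a $y$ is equivalent to some boolean combination $\psi(\bar x)$ of polynomial inequalities in $\bar x$ alone. This is accomplished by a sign-determination procedure: using Sturm's theorem, or more generally the subresultant/Cauchy-index machinery, one obtains polynomial conditions on the coefficients (hence on $\bar x$) that count the number of real roots of each $P_i$ in each interval and determine the joint sign pattern realizable by a single $y$. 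Care must be taken when leading coefficients vanish, which is handled by a finite case split recorded in the quantifier-free formula.

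Third, once quantifier elimination is in hand, completeness follows almost immediately. Given a first-order sentence $\phi$ (no free variables), quantifier elimination yields a quantifier-free sentence $\psi$ that is a boolean combination of atomic formulas $p \sigma 0$ where $p$ is an integer constant. Whether $p > 0$, $p = 0$, or $p < 0$ is decided by the embedded copy of $\ZZ \subseteq K$ present in every real closed field $K$, so $K \models \psi$ holds uniformly across all real closed fields. Therefore $K \models \phi$ holds either in all $K \models \mathrm{RCF}$ or in none, which is Tarski's principle.

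The main obstacle is the quantifier-elimination step, and within it the sign-determination for parametric families of univariate polynomials. The technical burden lies not in the idea — Sturm's algorithm is classical over any real closed field — but in verifying that each step (pseudo-division, subresultant computation, handling degenerate leading coefficients) can be expressed purely by polynomial conditions on the parameters, so that the resulting $\psi(\bar x)$ is genuinely quantifier-free in the language. All subsequent use of Tarski's principle in this paper, including the transfer of Corollary~\ref{cor_signature1} and Theorem~\ref{prop_half_equiv} from $\RR$ to $\Rt$, rests solely on the completeness conclusion, so no further effort is needed beyond citing this result.
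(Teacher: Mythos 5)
The paper does not prove this statement at all: it is imported as a black-box citation to \cite[Theorems 2.80, 2.81]{Basu2006Air}, and everything in Sections~\ref{sec_app} and the appendix merely uses it. Your outline is the standard Tarski--Seidenberg route used in that reference --- axiomatize $\mathrm{RCF}$, prove quantifier elimination by parametric sign determination (Sturm/subresultants with case splits on vanishing leading coefficients), then get completeness because a quantifier-free sentence is a boolean combination of integer sign conditions decided uniformly via the copy of $\ZZ$ in every real closed field --- so you are following essentially the same approach as the cited source, correctly including the common-substructure point needed to pass from quantifier elimination to completeness. The only caveat is that the quantifier-elimination step, which is the entire technical content of the theorem, is sketched rather than carried out; as a self-contained proof it is therefore incomplete, but as a justification at the level the paper itself operates (citing the result to transfer Corollary~\ref{cor_signature1} and Theorem~\ref{prop_half_equiv} from $\RR$ to $\Rt$) it is the right argument.
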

Here a first-order statement is a predicate constructed from addition, multiplication, equality,
inequality, and the standard logical connectives and quantifiers. 
Hence any true first order statement in $\RR$ is also true in $\Rt$. 
For example, the statement ``a polynomial $P(z)$ in ${\bf R}\{t\}$ is HPP" 
 can be written as a first-order statement in $\Rt$ as follows.
Substitute $u + i v$ to $z$ in $P(z)$, and
represent $P$ 
as $P (u + iv) = Q(u,v) + i R(u,v)$, 
where $Q, R$ are  polynomials in ${\bf R}\{t\}$.
Then the HPP statement is equivalent to
\[
\forall u \forall v ( Q(u,v) = 0 \wedge R(u,v) = 0 \rightarrow \lnot(u \geq 0)).
\] 
In this way, 
any polynomial relation in $\Ct$ can be written in polynomial relations in $\Rt$. 
Therefore the statement ``an Hermite matrix has real eigenvalues only" and Sylvester's law hold
in $\Ct$, which were used in Section~\ref{subsec_trop_one}. 
Also Theorem~\ref{prop_half_equiv} in Section~\ref{subsection_QPw} holds in $\Rt$.
\end{document}